\setlist[enumerate,1]{leftmargin=0.85cm}
\setlist[itemize,1]{leftmargin=0.55cm}
\theoremstyle{plain}
\newtheorem{theorem}{Theorem}[section]
\newtheorem{proposition}[theorem]{Proposition}
\newtheorem{corollary}[theorem]{Corollary}
\theoremstyle{definition}
\newtheorem{definition}[theorem]{Definition}
\theoremstyle{remark}
\newcommand{\ff}{\mathbf{f}}
 \DeclareRobustCommand{\checkarg}{\@ifnextchar[{\@witharg}{}}
 \DeclareRobustCommand{\@witharg}[1][]{\ensuremath{\left(#1\right)}}
 \DeclareRobustCommand{\scaleGen}[1]{\@ifnextchar[{\@scalewithargs{#1}}{\odot^{}_{#1}}}
 \def\@scalewithargs#1[#2][#3]{#2 \odot^{}_{#1} #3}
\def\Exc{\mathcal{E}}
\def\IPmag#1{\left\|\vphantom{I}#1\right\|}		
\def\skewer{\textsc{skewer}}		
\def\skewerP{\widebar{\skewer}}		
\def\bN{\mathbf{N}}			
\def\bF{\mathbf{F}}			
\def\bG{\mathbf{G}}			
\def\bX{\mathbf{X}}			
\def\BR{\mathbb{R}}				
\def\distribfont#1{\texttt{\upshape #1}}
\def\PRM{\distribfont{PRM}\checkarg}
\def\Stable{\distribfont{Stable}\checkarg}
\def\BESQ{\distribfont{BESQ}\checkarg}
\let\save@mathaccent\mathaccent
\newcommand*\if@single[3]{%
  \setbox0\hbox{${\mathaccent"0362{#1}}^H$}%
  \setbox2\hbox{${\mathaccent"0362{\kern0pt#1}}^H$}%
  \ifdim\ht0=\ht2 #3\else #2\fi
  }
\newcommand*\rel@kern[1]{\kern#1\dimexpr\macc@kerna}
\newcommand{\widebar}{}
\DeclareRobustCommand*\widebar[1]{\@ifnextchar^{\wide@bar{#1}{0}}{\wide@bar{#1}{1}}}
\newcommand*\wide@bar[2]{\if@single{#1}{\wide@bar@{#1}{#2}{1}}{\wide@bar@{#1}{#2}{2}}}
\newcommand*\wide@bar@[3]{%
  \begingroup
  \def\mathaccent##1##2{%
    \let\mathaccent\save@mathaccent
    \if#32 \let\macc@nucleus\first@char \fi
    \setbox\z@\hbox{$\macc@style{\macc@nucleus}_{}$}%
    \setbox\tw@\hbox{$\macc@style{\macc@nucleus}{}_{}$}%
    \dimen@\wd\tw@
    \advance\dimen@-\wd\z@
    \divide\dimen@ 3
    \@tempdima\wd\tw@
    \advance\@tempdima-\scriptspace
    \divide\@tempdima 10
    \advance\dimen@-\@tempdima
    \ifdim\dimen@>\z@ \dimen@0pt\fi
    \rel@kern{0.6}\kern-\dimen@
    \if#31
      \overline{\rel@kern{-0.6}\kern\dimen@\macc@nucleus\rel@kern{0.4}\kern\dimen@}%
      \advance\dimen@0.4\dimexpr\macc@kerna
      \let\final@kern#2%
      \ifdim\dimen@<\z@ \let\final@kern1\fi
      \if\final@kern1 \kern-\dimen@\fi
    \else
      \overline{\rel@kern{-0.6}\kern\dimen@#1}%
    \fi
  }%
  \macc@depth\@ne
  \let\math@bgroup\@empty \let\math@egroup\macc@set@skewchar
  \mathsurround\z@ \frozen@everymath{\mathgroup\macc@group\relax}%
  \macc@set@skewchar\relax
  \let\mathaccentV\macc@nested@a
  \if#31
    \macc@nested@a\relax111{#1}%
  \else
    \def\gobble@till@marker##1\endmarker{}%
    \futurelet\first@char\gobble@till@marker#1\endmarker
    \ifcat\noexpand\first@char A\else
      \def\first@char{}%
    \fi
    \macc@nested@a\relax111{\first@char}%
  \fi
  \endgroup
}
\numberwithin{equation}{section}
\numberwithin{figure}{section}
\numberwithin{table}{section}
\begin{document}

 
 \ \vspace{-22pt}
 
 \title[Interval partitions from Bertoin's study of ${\tt BES}_0(\lowercase{d})$]{Diffusions on a space of interval partitions:\\ construction from Bertoin's ${\tt BES}_0(\lowercase{d})$, $\lowercase{d}\in(0,1)$}
 
 
  \author[M.~Winkel]{Matthias Winkel}
  \address{Department of Statistics\\ University of Oxford\\ Oxford OX1 3LB\\ UK}
  \email{winkel@stats.ox.ac.uk}
  

 %
 %
  
\begin{abstract} 
  In 1990, Bertoin constructed a measure-valued Markov process in the framework of a Bessel process of dimension between 0 and 1. 
  In the present paper, we represent this process in a space of interval partitions. 
  We show that this is a member of a class of interval partition diffusions introduced recently and independently by Forman, Pal, Rizzolo and Winkel
  using a completely different construction from spectrally positive stable L\'evy processes with index between 1 and 2 and with jumps marked by 
  squared Bessel excursions of a corresponding dimension between $-2$ and 0. 
\end{abstract}  
 
 \keywords{Interval partition, Bessel process, measure-valued diffusion, Poisson--Dirichlet distribution, excursion theory}
 \subjclass[2010]{Primary 60J25, 60J60, 60J80; Secondary 60G18, 60G55}

\maketitle

 \ \vspace{-22pt}

\section{Introduction}
\label{sec:intro}

\noindent We define interval partitions, following Aldous \cite[Section 17]{AldousExch} and Pitman \cite[Chapter 4]{CSP}.

\begin{definition}\label{def:IP_1}
 An \emph{interval partition} is a set $\beta$ of disjoint, open subintervals of some finite real interval $[0,M]$, that cover $[0,M]$ up to a Lebesgue-null set. We write $\IPmag{\beta}$ to denote $M$. We refer to the elements of an interval partition as its \emph{blocks}. The Lebesgue measure of a block is called its \emph{mass} or \emph{size}.
\end{definition}

In this paper we construct diffusion processes in a space of interval partitions in Bertoin's \cite{Bertoin1990,Bertoin1990c} framework of 
a Bessel process of dimension $d\in(0,1)$. Bertoin studied the excursions of such a Bessel process. Specifically, he first 
decomposed the Bessel process \vspace{-0.1cm}
\begin{equation}\label{framework}\mathbf{R}=\mathbf{B}-(1-d)\mathbf{H}
\end{equation}
into a Brownian motion $\mathbf{B}$ and a path-continuous process 
$\mathbf{H}$ with zero quadratic variation. He constructed excursions of the Markov process $(\mathbf{R},\mathbf{H})$ away from $(0,0)$, each consisting of infinitely many excursions of $\mathbf{R}$ away from $0$.
By extracting suitable statistics, namely the set $\{\mathbf{R}(t)\colon t\ge 0,\,\mathbf{H}(t)=y\}$, he showed \cite[Theorems II.2--II.3]{Bertoin1990c} that the 
measure-valued process\vspace{-0.1cm}
  \begin{equation}\label{mvp} y\mapsto\mu^y_{[0,T]}:=\sum_{0\le t\le T\colon\mathbf{H}(t)=y,\mathbf{R}(t)\neq 0}\delta_{\mathbf{R}(t)}\vspace{-0.1cm}
  \end{equation} 
is path-continuous (in the vague topology for sigma-finite point measures on $(0,\infty)$) and Markovian when $T$ is chosen suitably such 
as an inverse local time $\tau_{\mathbf{R},\mathbf{H}}^0(u)$, $u\ge 0$, of $(\mathbf{R},\mathbf{H})$ at $(0,0)$. He further
showed in \cite[Theorem 4.2]{Bertoin1990} and \cite[Corollary II.4]{Bertoin1990c} that\vspace{-0.2cm} 
\begin{equation}\label{eq:loctime}y\mapsto\lambda^y(T):=2\int_{(0,\infty)}x\mu_{[0,T]}^y(dx)=2\sum_{0\le t\le T\colon\mathbf{H}(t)=y}\mathbf{R}(t)
\end{equation}
is \BESQ[0], a zero-dimensional squared Bessel process. We provide a more comprehensive review of Bertoin's results in Section \ref{sec:Bertoin}. 
In this paper, we represent his measure-valued process \eqref{mvp} as a diffusion in a space of interval partitions.

\begin{theorem}\label{thm:Bertoin} In the setting of \eqref{framework}--\eqref{eq:loctime}, with $T=\tau_{\mathbf{R},\mathbf{H}}^0(u)$, the interval partitions
  $$\beta^y:=\Big\{\big(\lambda^y(t-),\lambda^y(t)\big)\colon t\in[0,T],\,\mathbf{R}(t)\neq 0,\,\mathbf{H}(t)=y\Big\},\quad y\ge 0,$$
  form a diffusion process in a suitable space interval partitions.
\end{theorem}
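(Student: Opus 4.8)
\noindent The plan is to recast Bertoin's data $(\mathbf{R},\mathbf{H})$ as the scaffolding-and-spindles input to the self-similar interval partition evolutions of Forman, Pal, Rizzolo and Winkel, to identify $(\beta^y,\,y\ge 0)$ with the associated \skewer\ process, and then to quote their theorem that such \skewer\ processes are diffusions. Write $\alpha:=1-d\in(0,1)$, so that $1+\alpha\in(1,2)$ and $-2\alpha\in(-2,0)$.

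First I would extract the spindles and the scaffolding. Fix an excursion of the Markov process $(\mathbf{R},\mathbf{H})$ away from $(0,0)$ and let $(e_s)$ be its constituent excursions of $\mathbf{R}$ away from $0$, indexed by a suitably normalised local time $s$ of $\mathbf{R}$ at $0$; write $(g_s,d_s)$ for the time-interval carrying $e_s$. On $[g_s,d_s]$ one has $\mathbf{H}(t)-\mathbf{H}(g_s)=\tfrac12\int_{g_s}^{t}\mathbf{R}(r)^{-1}\,dr$, so $t\mapsto\mathbf{H}(t)$ is strictly increasing there and $e_s$ may be reparametrised by $\mathbf{H}$-time through the time-change $dt=2\mathbf{R}(t)\,d\mathbf{H}(t)$; set
\[
  f_s(z):=2\,\mathbf{R}\big(A_s(z)\big),\qquad z\in[0,\life(f_s)],\qquad \life(f_s):=\mathbf{H}(d_s)-\mathbf{H}(g_s),
\]
where $A_s$ inverts $t\mapsto\mathbf{H}(t)-\mathbf{H}(g_s)$ on $[g_s,d_s]$. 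It\^o's formula applied to the Bessel relation \eqref{framework} under this time-change identifies each $f_s$ as a \BESQ[-2\alpha]-excursion, and Bertoin's excursion theory (reviewed in Section~\ref{sec:Bertoin}) gives that $\sum_s\delta_{(s,f_s)}$ is a Poisson random measure on $[0,\infty)\times\Exc$ with intensity $\Leb\otimes\mBxcA$; also $f_s\big(y-\mathbf{H}(g_s)\big)=2\mathbf{R}(t)$ at the unique $t\in(g_s,d_s)$ with $\mathbf{H}(t)=y$, so $f_s$ records exactly the jump of $\lambda^y$ produced by $e_s$. Compensating the (a.s.\ divergent) sum of the $\mathbf{H}$-lengths $\life(f_s)$ then yields a process $\mathbf{X}$ with $\mathbf{X}(s-)=\mathbf{H}(g_s)$ and upward jumps $\Delta\mathbf{X}(s)=\life(f_s)$; using Bertoin's description of how $\mathbf{H}$ oscillates --- increasing along the $e_s$ and decreasing, in the regularised (principal-value) sense, across the Lebesgue-null zero set of $\mathbf{R}$ --- one recognises $\mathbf{X}$ as a spectrally positive \StableA-excursion, precisely the scaffolding attached to the Poisson measure $\sum_s\delta_{(s,f_s)}$ in the Forman--Pal--Rizzolo--Winkel construction. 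Doing this for every $(\mathbf{R},\mathbf{H})$-excursion occurring before $T=\tau_{\mathbf{R},\mathbf{H}}^0(u)$ and concatenating over the $(\mathbf{R},\mathbf{H})$-local time in $[0,u]$ produces exactly the input data of that construction, with $u$ governing the entrance from $\emptyset$.

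Next I would check that Bertoin's interval partition is the \skewer\ at level $y$. By definition the blocks of $\beta^y$ are the jump-gaps $(\lambda^y(t-),\lambda^y(t))$ of the increasing pure-jump function $t\mapsto\lambda^y(t)$, listed in increasing order of $t$; since $\lambda^y$ jumps by $2\mathbf{R}(t)=f_s\big(y-\mathbf{X}(s-)\big)$ exactly when $t$ is the crossing time of level $y$ inside some $e_s$, and the order of these times coincides with the order of the indices $s$ (equivalently, of scaffolding time), one obtains $\beta^y=\skewer\big(y,\sum_s\delta_{(s,f_s)},\mathbf{X}\big)$, up to the left--right orientation convention. In particular $\IPmag{\beta^y}$ equals the \BESQ[0] total-mass process $\lambda^y$ of \eqref{eq:loctime}, as it must. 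The theorem then follows from the Forman--Pal--Rizzolo--Winkel result that, for a Poisson random measure of intensity $\Leb\otimes\mBxcA$ together with its associated \StableA-scaffolding, the \skewer\ process is a path-continuous Hunt process --- a diffusion --- in an appropriate complete separable metric space of interval partitions; this provides both the ``suitable space'' and the continuity and strong Markov property.

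\emph{Main obstacle.} The technical heart is the construction of the scaffolding $\mathbf{X}$: Bertoin's $\mathbf{H}$ is a \emph{continuous} process of zero quadratic variation, and its hidden \StableA\ structure surfaces only after compensating the divergent sum $\sum_s\life(f_s)$ against the downward motion of $\mathbf{H}$ along the zero set of $\mathbf{R}$. Making this limit rigorous, pinning down the normalisation of $s$ so that the spindle intensity is \emph{exactly} $\Leb\otimes\mBxcA$ and $\mathbf{X}$ is \emph{exactly} \StableA\ (not a constant multiple), reconciling the two left--right conventions, and verifying that $T=\tau_{\mathbf{R},\mathbf{H}}^0(u)$ yields the correct entrance law from $\emptyset$, together make up the bulk of the work.
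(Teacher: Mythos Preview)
Your approach is the same as the paper's: build scaffolding and spindles from $(\mathbf{R},\mathbf{H})$, identify $\beta^y$ with the \skewer, and invoke \cite{IPPA} for the Hunt property. Your spindles $f_s(z)=2\mathbf{R}(A_s(z))$ coincide with the paper's local-time excursions $\lambda_s$ (the paper argues via Pitman--Yor's ``first description'' of the excursion measure together with the classical ${\tt BES}$--${\tt BESQ}$ time-change of \cite[Proposition XI.(1.11)]{RevuzYor}, rather than It\^o's formula, but this is a matter of taste).

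Where you make things harder than necessary is the scaffolding. You propose to obtain $\mathbf{X}$ by compensating the divergent sum $\sum_s\zeta(f_s)$ against the principal-value decrease of $\mathbf{H}$ on the zero set of $\mathbf{R}$, and you flag this limit as the ``main obstacle''. The paper sidesteps it entirely: since $\mathbf{H}$ is already a continuous process defined on all of $[0,\infty)$, one simply sets $\mathbf{X}:=\mathbf{H}\circ\tau^0_\mathbf{R}$. This composition is well-defined without any renormalisation, has jumps $\Delta\mathbf{X}(s)=\mathbf{H}(d_s)-\mathbf{H}(g_s)=\zeta(f_s)$, and Bertoin had already proved in \cite[Proof of Lemma~3.2]{Bertoin1990} that it is spectrally positive stable of index $2-d$. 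Your compensated construction would of course recover the same process, but the work is unnecessary. Likewise, there is no need to decompose first into $(\mathbf{R},\mathbf{H})$-excursions and then concatenate: one works directly with the $\mathbf{R}$-excursions indexed by $L^0$-time on the whole interval $[0,T]$, and $T=\tau_{\mathbf{R},\mathbf{H}}^{(0,0)}(u)$ corresponds, after time-change by $\tau^0_\mathbf{R}$, to stopping $\mathbf{X}$ at an inverse local time $\tau^0_\mathbf{X}(v)$ at $0$; the paper computes the normalising constant $v=2^{1-d}u$ by matching the rates of deep excursions in Bertoin's \cite[Lemma 3.3]{Bertoin1990} and \cite[Proposition 3.2]{IPPB}.
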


While the interval lengths $\lambda^y(t)-\lambda^y(t-)$ of $\beta^y$ are (twice) the locations $\mathbf{R}(t)$ of atoms of $\mu_{[0,T]}^y$, 
the order of the intervals is not captured by $\mu_{[0,T]}^y$. Hence, this theorem is not an immediate consequence of Bertoin's corresponding
results for $(\mu_{[0,T]}^y,\,y\ge 0)$

Indeed, we prove this theorem by identifying this diffusion process as an instance of a class of diffusion processes introduced in \cite{IPPA}, where we gave a general construction of processes in a space of interval partitions based on spectrally positive L\'evy processes
(\em scaffolding\em) whose point process of jump heights (interpreted as lifetimes of individuals) is marked by excursions (\em spindles\em, giving ``sizes'' varying during the 
lifetime, one for each level crossed). Informally, the interval partition evolution, indexed by level, considers for each level $y\ge 0$
the jumps crossing that level and records for each such jump an interval whose length is the ``size'' of the individual (width of the spindle) when crossing that level, 
ordered from left to right without leaving gaps. This construction and terminology is illustrated in Figure \ref{fig:skewer_1}.

\begin{figure}[t]
 \centering
 \begin{picture}(0,0)%
\includegraphics[scale=0.92]{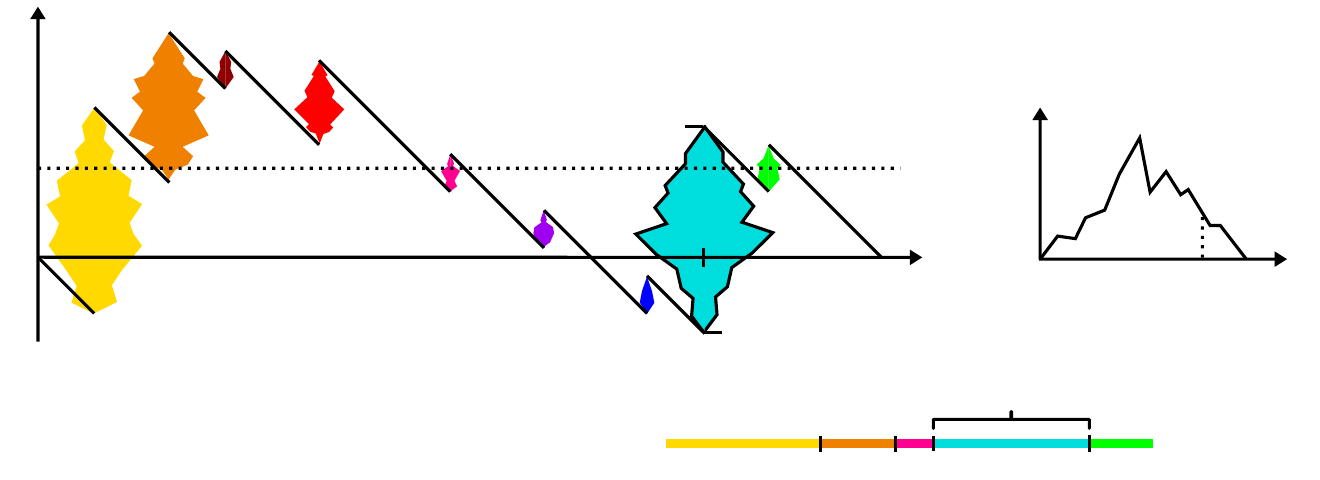}%
\end{picture}%
\setlength{\unitlength}{3730sp}%
\begingroup\makeatletter\ifx\SetFigFont\undefined%
\gdef\SetFigFont#1#2#3#4#5{%
  \reset@font\fontsize{#1}{#2pt}%
  \fontfamily{#3}\fontseries{#4}\fontshape{#5}%
  \selectfont}%
\fi\endgroup%
\begin{picture}(6324,2218)(304,-2699)
\put(3600,-1574){\makebox(0,0)[b]{\smash{{\SetFigFont{10}{12.0}{\familydefault}{\mddefault}{\updefault}{\color[rgb]{0,0,0}$s_j$}%
}}}}
\put(4451,-736){\makebox(0,0)[rb]{\smash{{\SetFigFont{10}{12.0}{\familydefault}{\mddefault}{\updefault}{\color[rgb]{0,0,0}$(N,X)$}%
}}}}
\put(5076,-2327){\rotatebox{360.0}{\makebox(0,0)[b]{\smash{{\SetFigFont{11}{13.2}{\familydefault}{\mddefault}{\updefault}{\color[rgb]{0,0,0}$f_j(y-X(s_j-))$}%
}}}}}
\put(406,-1276){\makebox(0,0)[rb]{\smash{{\SetFigFont{11}{13.2}{\familydefault}{\mddefault}{\updefault}{\color[rgb]{0,0,0}$y$}%
}}}}
\put(3411,-1096){\makebox(0,0)[rb]{\smash{{\SetFigFont{11}{13.2}{\familydefault}{\mddefault}{\updefault}{\color[rgb]{0,0,0}$X(s_j)$}%
}}}}
\put(3731,-2086){\makebox(0,0)[lb]{\smash{{\SetFigFont{11}{13.2}{\familydefault}{\mddefault}{\updefault}{\color[rgb]{0,0,0}$X(s_j-)$}%
}}}}
\put(6400,-1733){\makebox(0,0)[lb]{\smash{{\SetFigFont{10}{12.0}{\familydefault}{\mddefault}{\updefault}{\color[rgb]{0,0,0}$z$}%
}}}}
\put(5775,-1000){\makebox(0,0)[b]{\smash{{\SetFigFont{10}{12.0}{\familydefault}{\mddefault}{\updefault}{\color[rgb]{0,0,0}$(f_j(z),\ z\geq 0)$}%
}}}}
\put(3231,-2600){\makebox(0,0)[rb]{\smash{{\SetFigFont{11}{13.2}{\familydefault}{\mddefault}{\updefault}{\color[rgb]{0,0,0}$\skewer(y,N,X)$}%
}}}}
\end{picture}\vspace{-0.2cm}%
 \caption{Left: The sloping black lines show the scaffolding $X$. Shaded blobs decorating jumps show the corresponding spindles: points $(s_j,f_j)$ of $N$. Right: Graph of one spindle. Bottom: A skewer, with blocks shaded to correspond to spindles; not to scale.\label{fig:skewer_1}\vspace{-0.2cm}}
\end{figure}

Specifically, if $N=\sum_{i\in I}\delta_{(s_i,f_i)}$ is a point process of times $s_i\in[0,S]$ and excursions $f_i$ of excursion lengths $\zeta_i$ (spindle heights), 
and $X$ is a real-valued process with jumps $\Delta X(s_i):=X(s_i)-X(s_i-)=\zeta_i$ at times $s_i$, $i\in I$, we define the interval partition $\skewer(y,N,X)$ at 
level $y$, as follows.

\begin{definition}\label{def:skewer}
 For $y\in\BR$, $s\in [0,S]$, the \emph{aggregate mass} in $(N,X)$ at level $y$, up to time $s$ is
 \begin{equation}
  M_{N,X}^y(s) := \sum_{i\in I\colon s_i\le s}f_i(y - X(s_i-)).\label{eq:agg_mass_from_spindles}
 \end{equation}
 The \emph{skewer} of $(N,X)$ at level $y$, denoted by $\skewer(y,N,X)$, is defined as
 \begin{equation}
   \left\{\left(M^y_{N,X}(s-),M^y_{N,X}(s)\right)\!\colon s\in [0,S],\,M^y_{N,X}(s-) < M^y_{N,X}(s)\right\}\label{eq:skewer_def}
 \end{equation}
 and the \emph{skewer process} as $\skewerP(N,X) := \big( \skewer(y,N,X),y\!\geq\! 0\big)$. 
\end{definition}

This definition is meaningful when $X$ has finitely many jumps as in Figure \ref{fig:skewer_1}, and also when $X$ has a dense set of jump times and the $f_i$ are such that $M_{N,X}$ is finite. In \cite{IPPA}, we established criteria under which $\skewerP(\bN,\bX)$ is a diffusion. Specifically, $\bN$ is a Poisson random measure ({\tt PRM}) with intensity 
measure ${\tt Leb}\otimes\nu$, where $\nu$ is the Pitman--Yor excursion law \cite{PitmYor82} associated with a suitable (self-similar) $[0,\infty)$-valued diffusion, 
and $\bX$ is an associated L\'evy process, suitably stopped at a time $\mathbf{S}$ when $\bX$ is zero. In 
this interval partition evolution, each interval length (block) evolves independently according to the 
$[0,\infty)$-valued diffusion, which we call \em block diffusion\em, while between (the infinitely many) blocks, new blocks appear at the pre-jump levels of $\bX$. The
{\tt PRM} of jumps is obtained by mapping the {\tt PRM} of spindles onto the spindle heights. Conversely, we may view the {\tt PRM} of spindles as marking the
{\tt PRM} of jumps by \em block excursions. \em See Section \ref{sec:prel} for more details. 


\begin{theorem}\label{thm:diffusion_0} When the block diffusion is \BESQ[-2(1-d)], a squared Bessel process of dimension $-2(1\!-\!d)\!\in\!(-2,0)$ and 
  the scaffolding L\'evy process is \Stable[2\!-\!d] stopped at an inverse local time $\tau_\mathbf{X}^0(v)$ of $\mathbf{X}$ at 0, the interval partition evolution associated via $\skewerP$ is distributed as the diffusion in Theorem 
  \ref{thm:Bertoin}, for $u=2^{d-1}v$. 
\end{theorem}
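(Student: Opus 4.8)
The plan is to establish a pathwise correspondence between Bertoin's decomposed Bessel process $(\mathbf{R},\mathbf{H})$ and a scaffolding-and-spindles pair $(\bN,\bX)$ of the type described in Theorem~\ref{thm:diffusion_0}, so that the skewer of the latter coincides with the interval partitions $\beta^y$ of Theorem~\ref{thm:Bertoin}. The point is that the roles of the two coordinates get swapped: the slow coordinate $\mathbf{H}$ of Bertoin's process, which indexes the measure-valued evolution \eqref{mvp}, should play the role of the \emph{level} $y$ in the skewer, whereas the fast coordinate $\mathbf{R}$ should be encoded \emph{within} the spindles. Concretely, each excursion of $(\mathbf{R},\mathbf{H})$ away from $(0,0)$ — which, as Bertoin shows, consists of infinitely many $\mathbf{R}$-excursions away from $0$ glued along a common excursion of $\mathbf{H}$ — should correspond to a single jump of $\bX$ decorated by a single spindle $f$, the spindle being a \BESQ[-2(1-d)] excursion. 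The L\'evy process $\bX$ is built by concatenating these jumps in the order dictated by $\mathbf{H}$, i.e.\ $\bX$ accumulates the $\mathbf{H}$-local-time structure while its jump heights record the ``heights'' (in the $\mathbf{H}$-direction) of the excursions of $(\mathbf{R},\mathbf{H})$.

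The key steps, in order, are as follows. First I would recall from Section~\ref{sec:Bertoin} Bertoin's excursion theory for $(\mathbf{R},\mathbf{H})$: the decomposition of the path between successive visits to $(0,0)$, the excursion measure, and in particular his identity \eqref{eq:loctime} showing that $y\mapsto\lambda^y(T)$ is \BESQ[0]. Second, I would identify, for a single Bertoin excursion of $(\mathbf{R},\mathbf{H})$, the law of the function $y\mapsto\lambda^y$ restricted to that excursion; the claim is that this is precisely the Pitman--Yor excursion law $\nu$ of \BESQ[-2(1-d)], up to the deterministic time-change constant $2^{d-1}$ relating $u$ and $v$ in the statement. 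This is where one exploits the \BESQ[0] additivity/branching structure: a \BESQ[0] run until its inverse local time at $0$ decomposes as a \PPP of \BESQ[-2(1-d)]-type excursions — this is exactly the kind of squared-Bessel excursion decomposition underpinning the construction in \cite{IPPA}. Third, I would verify that the \emph{spatial ordering} of $\mathbf{R}$-atoms within a level set $\{\mathbf{H}=y\}$, as recorded in the definition of $\beta^y$ via the $\lambda^y$-clock, matches the left-to-right ordering produced by the skewer map of Definition~\ref{def:skewer} applied to the concatenated scaffolding $\bX$. Finally, I would assemble these pieces: show that the full collection of Bertoin excursions up to $T=\tau_{\mathbf{R},\mathbf{H}}^0(u)$ maps to a \PRM$(\mathrm{Leb}\otimes\nu)$ for $\bN$ and an associated \Stable[2-d] process stopped at $\tau_{\bX}^0(v)$ for $\bX$ — using that Bertoin's inverse-local-time clock for $(\mathbf{R},\mathbf{H})$ at $(0,0)$ is, after the $2^{d-1}$ rescaling, the inverse-local-time clock of $\bX$ at $0$ — and conclude by Theorem~\ref{thm:diffusion_0}'s hypotheses together with the skewer identification that $\skewerP(\bN,\bX)$ is the process of Theorem~\ref{thm:Bertoin}. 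The self-similarity index $2-d$ of the stable scaffolding and the dimension $-2(1-d)$ of the spindle diffusion should drop out of matching scaling exponents in Bertoin's framework against those catalogued in \cite{IPPA}.

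The main obstacle I expect is the second and third steps taken together: rigorously matching Bertoin's excursion measure for $(\mathbf{R},\mathbf{H})$ with the $\nu\otimes(\text{scaffolding jump})$ description, \emph{including} the spatial order. Bertoin works with the unordered point measures $\mu^y_{[0,T]}$ of \eqref{mvp}, and as the paper itself notes just after Theorem~\ref{thm:Bertoin}, the ordering of blocks is genuinely extra information not present in $\mu^y_{[0,T]}$. So one cannot simply quote Bertoin; one must track how the $\mathbf{R}$-excursions are arranged along the $\mathbf{H}$-excursion and show this arrangement is consistent, level by level in $y$, with a single monotone clock $\lambda^y$ — equivalently, that the interval endpoints $\lambda^y(t-)<\lambda^y(t)$ vary continuously and consistently in $y$, which is what makes $(\beta^y)$ a genuine interval-partition-valued \emph{path} rather than just a family of unordered mass collections. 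Establishing this continuity and the Markov property in the interval-partition space — as opposed to the vague topology Bertoin uses — is precisely the content that Theorem~\ref{thm:diffusion_0} is designed to supply, so the real work is in setting up the correspondence so that \cite{IPPA}'s machinery applies verbatim, and then carefully propagating the deterministic constant $2^{d-1}$ through the local-time normalizations.
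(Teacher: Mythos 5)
Your outline gets the broad dictionary right (levels $\leftrightarrow$ $\mathbf{H}$, spindles $\leftrightarrow$ $\mathbf{R}$, identify the scaffolding as \Stable[2-d], invoke the machinery of \cite{IPPA}, then match local-time clocks), but the unit of your correspondence is wrong, and this breaks the whole construction. You map each excursion of $(\mathbf{R},\mathbf{H})$ away from $(0,0)$ to a \emph{single} jump of $\bX$ carrying a \emph{single} \BESQ[-2(1-d)] spindle. Such an excursion contains infinitely many $\mathbf{R}$-excursions and, at a fixed level $y$, contributes in general \emph{many} atoms to $\mu^y_{[0,T]}$ in \eqref{mvp} (one for each $\mathbf{R}$-excursion whose $\mathbf{H}$-stretch straddles $y$), hence many blocks of $\beta^y$; a single spindle can contribute at most one block per level, so the skewer of your pair cannot reproduce $\beta^y$. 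Moreover, your identification of the law of $y\mapsto\lambda^y$ along one $(\mathbf{R},\mathbf{H})$-excursion as the Pitman--Yor measure of \BESQ[-2(1-d)] contradicts Bertoin's own result 4.1: given the crossing value, that profile is a pair of independent \BESQ[0] paths glued at level $0$. The correct correspondence, which is what the paper proves in Proposition \ref{prop:Bertoin}, is one scale finer: the scaffolding is $\mathbf{X}=\mathbf{H}\circ\tau^0_{\mathbf{R}}$, the time change by the inverse local time of $\mathbf{R}$ at $0$; each \emph{single} $\mathbf{R}$-excursion away from $0$ gives one jump of $\mathbf{X}$ (its $\mathbf{H}$-increment), decorated by the spindle equal to the local-time profile of $\mathbf{H}$ during that $\mathbf{R}$-excursion, which by the time-change identity \eqref{eq:excloctime}, $\lambda_s^{\mathbf{H}(\ell+t)-\mathbf{H}(\ell)}=2\mathbf{R}(\ell+t)$, is a time-changed copy of $2\mathbf{R}$ and is identified as a \BESQ[-2(1-d)] excursion via Pitman--Yor's ``first description'' (scale function $x^{2-d}$ common to both, plus the pre-/post-$T_x$ decomposition and the Revuz--Yor time change ${\tt BES}(4-d)\to\BESQ[4+2(1-d)]$). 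Bertoin's $(\mathbf{R},\mathbf{H})$-excursions away from $(0,0)$ then correspond to entire bi-clades of $(\bX,\bN)$, not to single spindles; with this finer correspondence the ordering issue you worry about is automatic, since \eqref{eq:excloctime} and \eqref{eq:loctime} give the block identity $\lambda^{y-\mathbf{H}(\tau^0_\mathbf{R}(s-))}_s=2\mathbf{R}(t)=\lambda^y(t)-\lambda^y(t-)$ pathwise.

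A second, smaller gap: you never actually derive the constant relating the two clocks, deferring it to ``propagating $2^{d-1}$ through the local-time normalizations''. In the paper this is a concrete computation: the excursions of $(\mathbf{R},\mathbf{H})$ with $\mathbf{H}$-infimum below $-y$ occur at rate $y^{d-1}$ (Bertoin's 3.3(iii)), while bi-clades of $(\bX,\bN)$ with $\mathbf{X}$-supremum above $y$ occur at rate $2^{-\alpha}y^{-\alpha}$ (\cite[Proposition 3.2]{IPPB}); matching these via bi-clade time reversal (or the mid-bi-clade Markov property) yields $c=2^{1-d}$, i.e.\ $u=2^{d-1}v$. Your suggested route via a ``\BESQ[0] excursion decomposition at $0$'' does not exist ($0$ is absorbing for \BESQ[0]), so some argument of this rate-matching type is genuinely needed.
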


The remainder of this paper is organised, as follows. In Sections \ref{sec:Bertoin} and \ref{sec:prel}, we state the main results of 
 \cite{Bertoin1990,Bertoin1990c} and \cite{IPPA,IPPB}, exhibiting the parallels. In Section \ref{sec:new}, we make precise the connections between
the two frameworks and deduce the theorems we have stated. In Section \ref{sec:new2}, we discuss some further observations.

\section{Bertoin's study of Bessel processes \cite{Bertoin1990,Bertoin1990c}}\label{sec:Bertoin}



\noindent Consider a Bessel process $\mathbf{R}\sim{\tt BES}_x(d)$ of dimension $d\in(0,1)$ starting from $x>0$. Let $T_\mathbf{R}(0)=\inf\{t\ge 0\colon\mathbf{R}(t)=0\}$. On $[0,T_\mathbf{R}(0))$, the Bessel process
$\mathbf{R}$ satisfies an SDE that yields 
\begin{equation}\mathbf{R}(t)=x+\mathbf{B}(t)-\frac{1-d}{2}\int_0^t\frac{du}{\mathbf{R}(u)}.\label{bessde}
\end{equation}
Furthermore, this singular integral is finite as $t\uparrow T_\mathbf{R}(0)$. By time reversal, this means that this integral is also well-defined under the 
excursion measure of the Bessel process. While the excursions can be stitched together to form a Bessel process that has 0 as a 
reflecting boundary, the positive values of these integrals are not summable so that the representation (\ref{bessde}) fails beyond $T_\mathbf{R}(0)$. However,
(\ref{bessde}) can be extended beyond $T_\mathbf{R}(0)$ if some compensation is introduced, as follows. It is well-known that the Bessel process $\mathbf{R}$ has 
jointly continuous space-time local times on $(0,\infty)^2$. To obtain a family of local times that extends continuously to $[0,\infty)^2$, it is 
convenient to choose the level-$a$ local time $(L^a(t),t\ge 0)$, $a>0$, of $\mathbf{R}$ such that the occupation density of $\mathbf{R}$ is $(a^{d-1}L^a(t),a>0,t\ge 0)$. By the occupation density formula and since $L^0(t)=0$ for 
$t<T_\mathbf{R}(0)$, we can write
\begin{equation}\label{eq:defH}\frac{1}{2}\int_0^t\frac{du}{\mathbf{R}(u)}=\frac{1}{2}\int_0^\infty a^{d-1}L^a(t)\frac{da}{a}=\frac{1}{2}\int_0^\infty a^{d-2}(L^a(t)-L^0(t))da=:\mathbf{H}(t)
\end{equation}
  for $t<T_\mathbf{R}(0)$. 
Bertoin showed that defining $\mathbf{H}(t)$ by the right-most integral in \eqref{eq:defH} also for $t\ge T_\mathbf{R}(0)$ yields a path-continuous process $\mathbf{H}$ with unbounded variation, but 
zero quadratic variation (the finiteness of $\mathbf{H}$ follows from the H\"older continuity of $L^a(t)$ in $a$).
Clearly, this process $\mathbf{H}$ is increasing on all excursion intervals of $\mathbf{R}$ away from zero, but the effect of the compensating local time at zero 
is that $\mathbf{H}$ does not increase across the zero-set of $\mathbf{R}$. With this notation, we have
\begin{equation}\mathbf{R}(t)=x+\mathbf{B}(t)-(1-d)\mathbf{H}(t),\qquad \mbox{for all }t\ge 0.\label{besdec}
\end{equation}
Bertoin noted that $(\mathbf{R},\mathbf{H})$ is a Markov process and that $(0,0)$ is recurrent for this Markov process. It is instructive to 
consider the excursions of $(\mathbf{R},\mathbf{H})$ away from $(0,0)$ by plotting $\mathbf{R}(t)$ against ``time'' $\mathbf{H}(t)$. Since $\mathbf{H}$ 
increases during each excursion of $\mathbf{R}$ away from 0, on $(\ell_s,r_s)$, say, such a plot shows a time-changed excursion of $\mathbf{R}$ 
starting from 0 at ``time'' $\mathbf{H}(\ell_s)$ and returning to 0 at ``time'' $\mathbf{H}(r_s)>\mathbf{H}(\ell_s)$. As $\mathbf{H}$ does not 
increase across the zero-set of $\mathbf{R}$, the excursions for different $(\ell_s,r_s)$ overlap, in general, when included in the same plot.

Since $\mathbf{H}$ is increasing when $\mathbf{R}$ is away from 0, and can only decrease across the zero-set of $\mathbf{R}$, the excursions of 
$(\mathbf{R},\mathbf{H})$ away from $(0,0)$ typically consist of many excursions of $\mathbf{R}$. Specifically, each excursion of 
$(\mathbf{R},\mathbf{H})$ can be decomposed into three parts: first, at the ``beginning'', there is an escape from $(0,0)$ towards the 
left by an accumulation of short $\mathbf{R}$-excursions until, in the ``middle'', one $\mathbf{R}$-excursion takes $(\mathbf{R},\mathbf{H})$ 
across to positive $\mathbf{H}$-values and, at the ``end'', there is a final approach back to $(0,0)$ from the right by an accumulation of short $\mathbf{R}$-excursions.\medskip

\noindent The main objects of interest in Bertoin's work \cite{Bertoin1990,Bertoin1990c} are
\begin{itemize}\item the excursions away from $(0,0)$ of $(\mathbf{R},\mathbf{H})$, and associated quantities,
  \item the excursions away from 0 of $\widetilde{\mathbf{R}}\!:=\!2\mathbf{R}\circ T_\mathbf{H}^+$, where $T_\mathbf{H}^+(y)\!=\!\inf\{t\!\ge\! 0\colon\!\mathbf{H}(t)\!>\!y\}$,\vspace{0.1cm}
  \item local time processes $(\lambda^y(t),y\ge 0)$ and $(\lambda^{-y}(t),y\ge 0)$ of $\mathbf{H}$, up to time $t\ge 0$,\vspace{0.1cm}
  \item measure-valued processes $y\!\mapsto\!\mu_{[0,T]}^y\!=\!\sum_{0\le t\le T\colon\mathbf{H}(t)=y,\mathbf{R}(t)\neq 0}\delta_{\mathbf{R}(t)}$ for some $T\!>\!0$.
\end{itemize}
Specifically, some of the main results of \cite{Bertoin1990} are the following. We use Bertoin's numbering for
ease of reference.

\begin{enumerate}
  \item[2.4] The inverse local time $\tau_{\mathbf{R},\mathbf{H}}^{(0,0)}$ of $(\mathbf{R},\mathbf{H})$ at $(0,0)$ is stable with index $(1-d)/2$.
  \item[3.1] The It\^o excursion process of $(\mathbf{R},\mathbf{H})$ is a \PRM.\vspace{0.1cm}
  \item[3.2] Under the excursion measure, excursions of $(\mathbf{R},\mathbf{H})$ are time-reversible.\vspace{0.1cm}
  \item[3.3]\begin{enumerate}
                    \item[(i)] A.s., all excursions of $(\mathbf{R},\mathbf{H})$ away from $(0,0)$ start into $[0,\infty)\!\times\!(-\infty,0)$, 
                                                                         cross $(0,\infty)\times\{0\}$ at a unique time $U$
                                                                         and finish from $[0,\infty)\times(0,\infty)$. 
    	         \item[(ii)] The \PRM\ has points at excursions whose value of $\mathbf{R}$ when the excursion is crossing the line $\mathbf{H}=0$ 
    	         							    has (sigma-finite) law $((1-d)/\Gamma(d))x^{d-2}dx$, and
    	         \item[(iii)] points at excursions with an $\mathbf{H}$-infimum below $-y$ occur at rate $y^{d-1}$.\vspace{0.1cm}
    	       \end{enumerate}  
  \item[3.4] Mid-excursion Markov property: conditionally given an $\mathbf{R}$-value of $\mathbf{R}(U)=x$ at the crossing time $U$ of the line $\mathbf{H}=0$, the 
    post-$U$ part of the excursion and the time-reversed pre-$U$ part are independent and distributed as the process $(\mathbf{R},\mathbf{H})$ starting from $(x,0)$ and
    stopped when hitting $(0,0)$.\vspace{0.1cm}
  \item[4.1] In the setting of result 3.4, conditionally given $\mathbf{R}(U)\!=\!x$, the above-0 and below-0 local 
    time processes  $(\lambda^y(t),y\ge 0)$ and $(\lambda^{-y}(t),y\ge 0)$ of $\mathbf{H}$ during an excursion of $(\mathbf{R},\mathbf{H})$ are two independent $\BESQ_{2x}(0)$.
  \item[4.2] The level-0 local time $\lambda^0$ of $\mathbf{H}$ time-changed by the inverse local time $\tau_{\mathbf{R},\mathbf{H}}^{(0,0)}$ is a 
    stable subordinator of index $1-d$. Given $\lambda^0(\tau_{\mathbf{R},\mathbf{H}}^{(0,0)}(u))\!=\!x$, the processes $(\lambda^y(\tau_{\mathbf{R},\mathbf{H}}^{(0,0)}(u)),y\!\ge\! 0)$ and
    $(\lambda^{-y}(\tau_{\mathbf{R},\mathbf{H}}^{(0,0)}(u)),y\!\ge\! 0)$ are two independent $\BESQ_x(0)$.
\end{enumerate}
The main additional results of \cite{Bertoin1990c} are the following.

\begin{enumerate}
  \item[I.5] Under the It\^o excursion measure of $\widetilde{\mathbf{R}}$, excursions
    \begin{enumerate}\item[(i)] start positive with initial values at rate $(2^{1-d}(1\!-\!d)/\Gamma(d))x^{d-2}dx$, and
                                   \item[(ii)] when starting from $x$ evolve as $\BESQ_x(-2(1-d))$.\vspace{0.1cm}
    \end{enumerate}  
  \item[I.6] The semi-group of $\widetilde{\mathbf{R}}$ is characterised by its Laplace transforms, for $\gamma\ge 0$,
    $\mathbb{E}_x(\exp(-\gamma\widetilde{\mathbf{R}}(y)))=\exp(-x/2y)\left((1+2\gamma y)^{1-d}\exp(x/(2+4\gamma y))-(2\gamma y)^{1-d}\right)$.
  \item[II.1] The measure-valued process $y\!\mapsto\!\mu^y_{[0,T_{\mathbf{H}}(-1)]}$ for $T_{\mathbf{H}}(-1)\!:=\!\inf\{t\!\ge\! 0\colon\mathbf{H}(t)\!=\!-1\}$ 
    admits a continuous version in the space $\mathcal{N}((0,\infty))$ of point measures that are finite on $(\varepsilon,\infty)$ for all $\varepsilon>0$, 
    equipped with the topology of vague convergence.
  \item[II.2]
    \begin{enumerate}
      \item[(i)] The process $(\mu^y_{[0,T_{\mathbf{H}}(-1)]},y\ge 0)$ is Markovian.
      \item[(ii)] Its semi-group $\kappa_y^{\mathcal{N}}$, $y\!\ge\! 0$, acts on functions $f_\varphi(\sum_{i\in I}n_i\delta_{x_i})\!=\!\prod_{i\in I}(\varphi(x_i))^{n_i}$ for 
        continuous $\varphi\colon(0,\infty)\rightarrow[0,1]$ as $\kappa_y^{\mathcal{N}}f_\varphi=f_{\varphi_y}$, where $\varphi_y(x)$ is given by 
        $e^{-x/y}\!+\!\int_0^\infty\varphi(a)p_y(x,da)\big/\big(1+y^{1-d}\!\int_0^\infty((1\!-\!d)/\Gamma(d))s^{d-2}(1\!-\!\varphi(s))e^{-s/y}ds\big)$, where
        $\int_0^\infty e^{-\gamma a}p_y(x,da)=(1+\gamma y)^{1-d}\big(e^{-\gamma x/(1+\gamma y)}-e^{-x/y}\big)$, for all $\gamma\ge 0$.
      \item[(iii)] The process $(\mu^{-1+y}_{[0,T_{\mathbf{H}}(-1)]},0\!\le\! y\!\le\! 1)$ is Markovian with semi-group $\widetilde{\kappa}_y^{\mathcal{N}}$, $y\!\ge\! 0$, given by 
        $\widetilde{\kappa}_y^{\mathcal{N}}f_\varphi=f_{\varphi_y}\big/\big(1+y^{1-d}\!\int_0^\infty((1\!-\!d)/\Gamma(d))s^{d-2}(1\!-\!\varphi(s))e^{-s/y}ds\big)$.
    \end{enumerate}
  \item[II.3] Given $\mu^0_{[0,\tau_{\mathbf{R},\mathbf{H}}^{(0,0)}(1)]}$, the processes $(\mu^y_{[0,\tau_{\mathbf{R},\mathbf{H}}^{(0,0)}(1)]},y\!\ge\! 0)$ and $(\mu^{-y}_{[0,\tau_{\mathbf{R},\mathbf{H}}^{(0,0)}(1)]},y\!\ge\! 0)$ are conditionally independent and have the semi-group $(\kappa_y^\mathcal{N},y\!\ge\! 0)$, of II.2(ii). 
  \item[II.4]
    \begin{enumerate}
      \item[(i)] The process $(\lambda^{-1+y}(T_\mathbf{H}(-1)),0\!\le\! y\!\le\! 1)$ is a ${\tt BESQ}_0(2-2d)$.
      \item[(ii)] Given $\lambda^0(T_\mathbf{H}(-1))=x$, the process $(\lambda^y(T_\mathbf{H}(-1)),y\!\ge\! 0)$ is a ${\tt BESQ}_x(0)$. 
      \item[(iii)]  Given $\lambda^0(\tau_{\mathbf{R},\mathbf{H}}^{(0,0)}(1))=x$, the process $(\lambda^y(\tau_{\mathbf{R},\mathbf{H}}^{(0,0)}(1)),y\!\ge\! 0)$ is a ${\tt BESQ}_x(0)$. \vspace{-0.1cm}
    \end{enumerate}  
\end{enumerate}

\section{Skewer processes of marked L\'evy processes \cite{IPPA,IPPB}\vspace{-0.1cm}}
\label{sec:prel}

\noindent Let $\alpha\!\in\!(0,1)$ 
and $\bX$ a spectrally positive \Stable[1\!+\!\alpha]-process with Laplace exponent $\psi(c)=c^{1+\alpha}/2^\alpha\Gamma(1\!+\!\alpha)$. We call $\bX$ \em scaffolding \em and proceed to decorate it. Specifically, consider the \PRM\ $\sum_{i\in\mathbf{I}}\delta_{(\mathbf{s}_i,\Delta\mathbf{X}(\mathbf{s}_i))}$ of its jumps. For each jump $\Delta\mathbf{X}(\mathbf{s}_i)$, consider an independent $\BESQ(-2\alpha)$ excursion (\em spindle\em) $\mathbf{f}_i$ 
of length $\zeta(\mathbf{f}_i)=\Delta\mathbf{X}(\mathbf{s}_i)$. These excursions were studied by Pitman and Yor \cite{PitmYor82}, who also noted, in their Remark (5.8) on pp. 453f., that when
conditioned on their length, they are $\BESQ(4+2\alpha)$ bridges from 0 to 0. By standard marking of \PRM s, $\bN:=\sum_{i\in\mathbf{I}}\delta_{(\mathbf{s}_i,\mathbf{f}_i)}$ is itself a \PRM\ on the space $[0,\infty)\times\Exc$, 
where $\Exc$ is the space of (continuous) excursion paths.
This is illustrated in a simplified way in Figure \ref{fig:skewer_1}. The intensity measure ${\tt Leb}\otimes\nu$ of $\mathbf{N}$ is the Pitman--Yor excursion measure of
\cite{PitmYor82}, which can be described by entrance laws and a further evolution as unconditioned $\BESQ(-2\alpha)$ processes. 

Recall that the skewer of Definition \ref{def:skewer} extracts from $N=\sum_{i\in I}\delta_{(s_i,f_i)}$ 
all level-$y$ spindle masses $f_i(y-X(s_i))$, where $y\in(X(s_i-),X(s_i))$, $i\in I$, and builds the interval partition that has these as interval lengths in the order given by
the $s_i$, $i\in I$. The set $\mathcal{I}_H$ of all interval partitions can be equipped with a distance $d_H$ that applies the Hausdorff metric to the set of points not
covered by the intervals. 

Since $\bX$ is spectrally positive, its (c\`adl\`ag) excursions away from 0 (or any other level $y$) start negative, jump across zero and end positive. 
Applied to $(\bN,\bX)$, the skewer at level $y$ extracts one block from each excursion of $\bX$ away from $y$. 
In \cite{IPPA}, we denote the \PRM\ of excursions of $\bX$ away from $y$ by $\bG^y$ and enhance the excursion theory of $\bX$ to include $\bN$:
each excursion $e_{[\ell,r]}\!:=\!(-y\!+\!\bX|_{[\ell,r]}(\ell\!+\!s),s\!\in\![0,r\!-\!\ell])$ of $\bX$ has its jumps marked by spindles. 
We denote  by $\bF^y$ the associated random measure whose points are pairs of $e_{[\ell,r]}$ and the restriction $\bN|_{[\ell,r]\times\Exc}$ shifted to $[0,r-\ell]\times\Exc$. In each excursion with spindle marks, the \em central spindle \em crossing 0 can be viewed as the ``middle'' of 
three parts, separating the spindles of the ``beginning'' where $\bX$ is negative from the spindles of the ``end'' where $\bX$ is positive.

We refer to excursions of $(\bX,\bN)$ as \em bi-clades\em, 
to the negative part of such an excursion including the central spindle up to level 0 as an \em anti-clade\em, 
and to the remainder as a \em clade\em. 
To start an interval-partition-valued process from any interval partition $\beta$, we consider \em clades starting from ${\tt Leb}(V)$\em, $V\!\in\!\beta$, as follows.
For each \pagebreak interval $V\!\in\!\beta$ independently, consider $\ff_V\!\sim\!\BESQ_{{\tt Leb}(V)}(-2\alpha)$ and an independent $(\bX,\bN)$ stopped at $S_\mathbf{X}(-\zeta(\ff_V))\!:=\!\inf\{s\!\ge\! 0\colon\bX(s)\!=\!-\zeta(\ff_V)\}$, for the length $\zeta(\ff_V)$ of $\ff_V$, then form the clade 
$(\bX_V,\mathbf{N}_V)\!:=\!(\zeta(\ff_V)\!+\!\bX|_{[0,S_\mathbf{X}(-\zeta(\ff_V))]},\delta_{(0,\ff_V)}\!+\!\bN|_{[0,S_\mathbf{X}(-\zeta(\ff_V))]\times\Exc})$. 
We stitch together all excursions $\bX_V$ in the left-to-right order of $V\!\in\!\beta$ to form a scaffolding $\mathbf{X}_\beta$,
similarly build $\mathbf{N}_\beta$ from $\mathbf{N}_V$, $V\!\in\!\beta$, and consider $\skewerP(\mathbf{N}_\beta,\mathbf{X}_\beta)$.

\medskip

\noindent Some of the main objects of interest are 

\begin{itemize}\item the pair $(\bX,\bN)$ of the \Stable[1+\alpha] scaffolding $\bX$ and the \PRM\ $\bN$ of spindles,
  \item the random point measures $\bF^y$, $y\ge 0$, of bi-clades of $(\bX,\bN)$,
  \item the \em type-1 evolution \em $(\beta^y,y\!\ge\! 0):=\skewerP(\bN_\beta,\bX_\beta)$, extracting intervals from the spindles in
    jumps of $\mathbf{X}_\beta$ crossing level $y$, for any initial interval partition $\beta$.
  \item the total mass process $(\IPmag{\beta^y},\,y\ge 0)$.
\end{itemize}

\noindent Some of the main results of \cite{IPPA} are the following, in the numbering of \cite{IPPA}.

\begin{enumerate}
  \item[1.3] The interval-partition-valued process $y\mapsto\beta^y$ admits a continuous version.
  \item[1.4] Type-1 evolutions $y\mapsto\beta^y$ are path-continuous Hunt processes: they can be started from any interval partition in a Lusin state space $(\mathcal{I}_H,d_H)$, are
    continuous in the initial condition and satisfy the strong Markov property.
  \item[3.2] The level-$y$ aggregate mass process $s\mapsto M_{\bN,\bX}^y(s)$ of \eqref{eq:agg_mass_from_spindles}, time-changed by the inverse local time $\tau^y_\bX$ of $\bX$ at level $y$ is a stable
    subordinator of index $\alpha$. 
  \item[4.9] $\bF^y$ is a \PRM, whose intensity measure we call \em bi-clade excursion measure\em.
  \item[4.11] Bi-clades are space/time-reversible in the sense that reversing scaffolding time and block diffusion time in spindles yields
    the same bi-clade excursion measure.
  \item[4.15] Mid-bi-clade Markov property: conditionally given a spindle mass $f_R(\mathbf{X}_{R-})=x$ of 
    the spindle $(R,f_R)$ in $\mathbf{N}$ at the time $R$ when the scaffolding $\mathbf{X}$ crosses the line $\mathbf{X}=0$, the clade part (post-$R$) and the time-reversed anti-clade part (pre-$R$) are independent and 
    distributed as clades starting from $x$.
  \item[5.5] The skewer processes of $(\mathbf{N},\mathbf{X})$ stopped at stopping times including $\tau_\mathbf{X}^0(u)$ and $S_\mathbf{X}(-u)$ for $u\ge 0$
    are type-1 evolutions. 
\end{enumerate}

\noindent In \cite{IPPB}, we further prove the following.
\begin{enumerate}
  \item[1.2] The type-1 semi-group $\kappa_y^\mathcal{I}$, $y\!\ge\!0$, is as follows. Independently for each block $V\!\in\!\beta^0$ of size $b\!=\!{\tt Leb}(V)$, there is a contribution to 
     level $y$ with probability $1\!-\!e^{-b/2y}$. 
     Such a contribution consists of a left-most interval with Laplace transform $(1\!+\!\gamma/r)^\alpha(e^{br^2/(r+\gamma)}\!-\!1)/(e^{br}\!-\!1)$, 
     where $r\!=\!1/2y$, 
     concatenated with a scaled ${\tt PDIP}(\alpha,\alpha)$, the interval partition formed by the excursion intervals of a ${\tt BES}(2-2\alpha)$-bridge,
     scaled by an independent ${\tt Gamma}(\alpha,1/2y)$-distributed random factor. The contributions are
     concatenated in the order of $V\in\beta^0$ to give the distribution under the time-$y$ transition kernel starting from $\beta^0$.
  \item[1.3] The kernels $\widetilde{\kappa}_y^\mathcal{I}$ obtained by concatenating 
     a ${\tt PDIP}(\alpha,\alpha)$  scaled by an independent ${\tt Gamma}(\alpha,1/2y)$ with the interval partition from $\kappa_y^{\mathcal{I}}$, $y\!\ge\!0$, 
     also form the semi-group of a path-continuous Hunt process, called \em type-0 evolution\em.
  \item[1.4]\begin{enumerate}
                    \item[(i)] The total mass process $(\IPmag{\beta^y},y\!\ge\! 0)$ of a type-1 evolution is $\BESQ_{\IPmag{\beta^0}}(0)$, 
                      for all initial $\beta^0\in\mathcal{I}_H$, hence including the case of a single clade.
                   \item[(ii)] The total mass processes of type-0 evolutions are $\BESQ_{\IPmag{\beta^0}}(2\alpha)$.
                 \end{enumerate}
  \item[3.2] \begin{enumerate}\item[(i)] The \PRM\ $\mathbf{F}^y$ has points at bi-clades whose value of the central spindle mass when crossing $\mathbf{X}=0$
                                                                  has (sigma-finite) law $(\alpha/\Gamma(1-\alpha)) x^{-\alpha-1}dx$, and
                                                  \item[(ii)] points at bi-clades with $\mathbf{X}$-supremum above $y$ at rate $2^{-\alpha}y^{-\alpha}$.
                   \end{enumerate}
  \item[3.10] $(\skewer(y,\mathbf{N}|_{[0,S_\mathbf{X}(-u)]\times\Exc},u\!+\!\mathbf{X}|_{[0,S_\mathbf{X}(-u)]}),0\!\le\!y\!\le\! u)$ is a type-0 evolution. 
\end{enumerate}

\section{Construction of $(\mathbf{X},\mathbf{N})$ from $(\mathbf{R},\mathbf{H})$ and vice versa}\label{sec:new}

\noindent Let $\tau_\mathbf{R}^0(s)\!=\!\inf\{L^0(t)\!>\!s\}$, $s\!\ge\! 0$, be the inverse local time of $\mathbf{R}$ at $0$, 
and $\mathbf{K}$ the \PRM\ of excursions of $\mathbf{R}$ away from $0$. For each excursion interval
$(\tau^0_\mathbf{R}(s-),\tau^0_\mathbf{R}(s))=(\ell,r)$ of $\mathbf{R}$, we decompose the Bessel excursion $(\mathbf{R}(\ell+t),0\le t\le r-\ell)$ in $\mathbf{K}$ as in 
$\mathbf{R}=\mathbf{B}-(1-d)\mathbf{H}$ in \eqref{besdec}, and we define the associated occupation density local time process $\lambda_s:=(\lambda^y_s,y\ge 0)$ of 
$(\mathbf{H}(\ell+t)-\mathbf{H}(\ell),0\le t\le r-\ell)$. 

\begin{proposition}\label{prop:Bertoin} The random measure $\sum_{s\ge 0\colon\tau^0_\mathbf{R}(s-)<\tau^0_\mathbf{R}(s)}\delta_{\lambda_s}$ is 
  a $\PRM\big({\tt Leb}\otimes\overline{\nu}\big)$, where $\overline{\nu}$ is a Pitman--Yor excursion measure associated with \BESQ[-2(1-d)], the process $\mathbf{H}\circ\tau^0_\mathbf{R}$ is a spectrally 
  positive stable process of index $2\!-\!d$. The pair has the same distribution as $(\mathbf{N},\mathbf{X})$ in Section \ref{sec:prel}, with $\alpha=1-d$,
  up to a linear time-change.
\end{proposition}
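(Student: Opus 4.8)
The plan is to exhibit both $(\mathbf{N},\mathbf{X})$ and the pair $\big(\sum_s\delta_{(s,\lambda_s)},\,\mathbf{H}\circ\tau_\mathbf{R}^0\big)$ (with the first coordinate $s$ being the $\mathbf{R}$-local time, matching that of $\mathbf{N}$) as the \emph{same} deterministic measurable functional of a Poisson random measure of spindles, and then to match the two spindle intensities. The point is that the scaffolding $\mathbf{X}$ of Section~\ref{sec:prel} is the spectrally positive $\Stable[1+\alpha]$ process with Laplace exponent $c\mapsto c^{1+\alpha}/2^\alpha\Gamma(1+\alpha)$, which has neither a Gaussian component nor a drift; hence $\mathbf{X}$ is a.s.\ the compensated sum of its jumps, and those jumps are recovered from $\mathbf{N}$ by sending each spindle to its lifetime $\zeta$, so that $(\mathbf{N},\mathbf{X})$ is a.s.\ a fixed function $\Psi$ of $\mathbf{N}$. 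I would then check that $\mathbf{H}\circ\tau_\mathbf{R}^0$ (with $\mathbf{R}$ started from $0$, equivalently with $\mathbf{H}$ recentred at time $T_\mathbf{R}(0)$) has the same three properties, so that $\big(\sum_s\delta_{(s,\lambda_s)},\,\mathbf{H}\circ\tau_\mathbf{R}^0\big)=\Psi\big(\sum_s\delta_{(s,\lambda_s)}\big)$, and finally identify the law of $\sum_s\delta_{(s,\lambda_s)}$ with that of $\mathbf{N}$ up to a linear time-change of the local-time parameter.

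\textbf{Spindle intensity.} By It\^o's excursion theory for $\mathbf{R}$ away from $0$, $\mathbf{K}=\sum_s\delta_{(s,e_s)}$ is a $\PRM\big({\tt Leb}\otimes n\big)$, $n$ the associated It\^o excursion measure of ${\tt BES}(d)$, $e_s$ the excursion on $(\ell_s,r_s):=(\tau_\mathbf{R}^0(s-),\tau_\mathbf{R}^0(s))$. Because $L^0$ is constant on $(\ell_s,r_s)$, there $\mathbf{H}(\ell_s+\cdot)-\mathbf{H}(\ell_s)$ equals $h_{e_s}:=\tfrac12\int_0^{\cdot}du/e_s(u)$, a strictly increasing continuous bijection of $[0,\zeta(e_s)]$ onto $[0,h_{e_s}(\zeta(e_s))]$, and the occupation-density change of variables gives $\lambda_s^y=2e_s\big(h_{e_s}^{-1}(y)\big)$ for $0<y<h_{e_s}(\zeta(e_s))$ and $\lambda_s^y=0$ otherwise; thus $\lambda_s$ is the image of $e_s$ under the measurable map $e\mapsto 2\,e\circ h_e^{-1}$, and by the mapping theorem $\sum_s\delta_{(s,\lambda_s)}$ is a $\PRM\big({\tt Leb}\otimes\overline{\nu}_0\big)$ with $\overline{\nu}_0$ the push-forward of $n$. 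To identify $\overline{\nu}_0$, start $\mathbf{R}\sim{\tt BES}_\varepsilon(d)$ and run it to $T_\mathbf{R}(0)$; with $h=\tfrac12\int_0^{\cdot}du/\mathbf{R}(u)$ and $Z(y):=2\mathbf{R}(h^{-1}(y))$, time-changing \eqref{bessde} by $t=h^{-1}(y)$ — so $dt=Z(y)\,dy$ and $\mathbf{B}\circ h^{-1}$ is a continuous martingale of bracket $\int_0^{\cdot}Z$ — turns it into
\begin{equation*}
 dZ(y)=2\sqrt{Z(y)}\,d\beta(y)-2(1-d)\,dy ,
\end{equation*}
i.e.\ $Z$ is a $\BESQ_{2\varepsilon}(-2(1-d))$ run to its first hitting time of $0$. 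The map $e\mapsto 2\,e\circ h_e^{-1}$ intertwines the Brownian scaling of ${\tt BES}(d)$ with the scaling of $\BESQ(-2(1-d))$, so $\overline{\nu}_0$ is scale-invariant and, by the display, Markovian with the $\BESQ[-2(1-d)]=\BESQ[-2\alpha]$ semigroup, where $\alpha=1-d\in(0,1)$; letting $\varepsilon\downarrow 0$ fixes the entrance behaviour and identifies $\overline{\nu}_0$ as a Pitman--Yor excursion measure of $\BESQ[-2\alpha]$ (consistent with Bertoin's result~I.5). Since these are unique up to a positive scalar, $\overline{\nu}_0=c_0\overline{\nu}$, with $\overline{\nu}$ (equivalently $\nu$ of Section~\ref{sec:prel}) the standard normalisation and $c_0>0$ an explicit constant determined by the normalisation of $n$, equivalently of $L^0$ (the source of the factor $2^{d-1}$ in Theorem~\ref{thm:diffusion_0}).

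\textbf{The scaffolding.} The increment $\mathbf{H}(\tau_\mathbf{R}^0(s+s'))-\mathbf{H}(\tau_\mathbf{R}^0(s))$ is a measurable functional of the $\mathbf{R}$-path on $[\tau_\mathbf{R}^0(s),\tau_\mathbf{R}^0(s+s')]$ alone (the local times accrued there, minus the amount $s'$ of $L^0$ accrued), so the strong Markov property of $\mathbf{R}$ at the zero $\tau_\mathbf{R}^0(s)$ makes $\mathbf{H}\circ\tau_\mathbf{R}^0$ a L\'evy process, finite-valued since $\mathbf{H}$ is continuous. Brownian scaling of $\mathbf{R}$ gives, jointly, $\mathbf{H}(c^2\,\cdot)\overset{d}{=}c\,\mathbf{H}(\cdot)$ and $L^0(c^2\,\cdot)\overset{d}{=}c^{2-d}L^0(\cdot)$, hence $(\mathbf{H}\circ\tau_\mathbf{R}^0)(c^{2-d}\,\cdot)\overset{d}{=}c\,(\mathbf{H}\circ\tau_\mathbf{R}^0)(\cdot)$: the process is self-similar of index $2-d=1+\alpha$. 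Its jumps occur exactly at the $s$ with $\tau_\mathbf{R}^0(s-)<\tau_\mathbf{R}^0(s)$, are all positive (continuity of $\mathbf{H}$), and of size $\mathbf{H}(r_s)-\mathbf{H}(\ell_s)=h_{e_s}(\zeta(e_s))=\zeta(\lambda_s)$. Therefore $\mathbf{H}\circ\tau_\mathbf{R}^0$ is a spectrally positive $\Stable[1+\alpha]$ process, and self-similarity of index $1+\alpha\in(1,2)$ excludes both a Gaussian part (scaling index $\tfrac12$) and a drift (scaling index $1$); so, exactly like $\mathbf{X}$, it is the compensated sum of its jumps, and those jumps are the lifetimes of the spindles $\lambda_s$.

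\textbf{Conclusion, and the main obstacle.} Combining the two steps, $\big(\sum_s\delta_{(s,\lambda_s)},\,\mathbf{H}\circ\tau_\mathbf{R}^0\big)=\Psi\big(\sum_s\delta_{(s,\lambda_s)}\big)$ and $(\mathbf{N},\mathbf{X})=\Psi(\mathbf{N})$, while $\sum_s\delta_{(s,\lambda_s)}\sim\PRM\big({\tt Leb}\otimes c_0\nu\big)$; a linear time-change of the local-time index turns the latter into $\PRM\big({\tt Leb}\otimes\nu\big)$ and simultaneously turns $\mathbf{H}\circ\tau_\mathbf{R}^0$ into a driftless spectrally positive $\Stable[1+\alpha]$ process whose jump intensity is that of $\mathbf{X}$, hence into a process with Laplace exponent $\psi$; applying $\Psi$ then yields the asserted equality in distribution with $(\mathbf{N},\mathbf{X})$. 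The step I expect to be the real work is the $\varepsilon\downarrow0$ passage in the spindle identification: one has to control the excursion-measure normalisation through the random time change $h$ and in the limit, which is what simultaneously certifies $\overline{\nu}_0$ as a genuine Pitman--Yor $\BESQ[-2\alpha]$ excursion measure and pins down $c_0$. Recovering the scaffolding from self-similarity, rather than trying to display its downward motion pathwise as the compensator of the infinitely many small jumps across the (Lebesgue-null) zero-set of $\mathbf{R}$, is precisely what keeps that second ingredient manageable.
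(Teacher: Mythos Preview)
Your argument is correct and close in spirit to the paper's, but the two diverge at the step of identifying $\overline{\nu}_0$ as the Pitman--Yor $\BESQ[-2(1-d)]$ excursion measure.

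You derive the $\BESQ$ SDE by time-changing ${\tt BES}_\varepsilon(d)$ and then invoke scale invariance plus an $\varepsilon\downarrow0$ limit, which you rightly flag as the main obstacle. The paper sidesteps this limit entirely: it verifies Pitman--Yor's ``First description'' of the excursion measure \cite[(3.1)]{PitmYor82}, which requires only (i) that the zero excursion is not charged, (ii) that the rate of $\{T_x<\infty\}$ is proportional to $1/s(x)$ with $s(x)=x^{2-d}$ the common scale function of ${\tt BES}(d)$ and $\BESQ[-2(1-d)]$, and (iii) that the pre-$T_x$ and post-$T_x$ parts have the right laws. For (iii), the paper notes that the pre-$T_x$ part of a ${\tt BES}(d)$ excursion is ${\tt BES}_0(4-d)$ (the process conditioned to stay positive) and cites \cite[Proposition~XI.(1.11)]{RevuzYor} for the fact that the time-change $t\mapsto h(t)$ turns this into $\BESQ_0(4+2(1-d))$, i.e.\ $\BESQ[-2(1-d)]$ conditioned to stay positive; the post-$T_x$ part is the same time-change applied to ${\tt BES}_x(d)$ stopped at $0$, which is exactly your SDE computation. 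This route avoids any entrance-law limit and fixes the normalisation directly via the scale function, so what you identify as ``the real work'' simply does not arise.

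For the scaffolding, the paper just cites Bertoin's own \cite[Lemma~3.2]{Bertoin1990}, whereas your self-contained argument (strong Markov at the zeros for the L\'evy property, Brownian scaling of $\mathbf{R}$ for self-similarity of index $2-d$, positivity of jumps for spectral positivity) is a genuine expository improvement. Your further observation that self-similarity of index $1+\alpha\in(1,2)$ excludes both Gaussian and drift components, so that $\mathbf{X}$ and $\mathbf{H}\circ\tau_\mathbf{R}^0$ are each the same deterministic functional $\Psi$ of their respective spindle point processes, makes the ``up to a linear time-change'' conclusion more transparent than in the paper, which leaves that last step implicit.
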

\begin{proof} Since $t\mapsto\mathbf{H}(t)$ is differentiable almost everywhere, with derivative $\mathbf{H}^\prime(t)=1/2\mathbf{R}(t)$, 
  its local time at level $\mathbf{H}(t)$ increases by a jump of $2\mathbf{R}(t)$ at time $t$. Specifically, during each excursion interval $(\tau^0_\mathbf{R}(s-),\tau^0_\mathbf{R}(s))=(\ell,r)$ of $\mathbf{R}$ away from 0, 
  we get 
  \begin{equation}\label{eq:excloctime}\lambda_s^{\mathbf{H}(\ell+t)-\mathbf{H}(\ell)}=2\mathbf{R}(\ell+t),\quad 0\le t\le r-\ell,
  \end{equation}
  i.e.\ the local times of $\mathbf{H}$ during excursions are continuous time-changes of the excursions of $2\mathbf{R}$. 
  Hence, the jump sizes of $\mathbf{H}\circ\tau^0_\mathbf{R}$ are 
  $$\mathbf{H}(\tau^0_\mathbf{R}(s))-\mathbf{H}(\tau^0_\mathbf{R}(s-))=\inf\{y>0\colon\lambda^y_s=0\}=\sup\{y\ge 0\colon\lambda^y_s>0\},\ s\ge 0.$$ 
  Bertoin \cite[Proof of Lemma 3.2]{Bertoin1990} showed that $\mathbf{H}\circ\tau^0_\mathbf{R}$ is a spectrally positive stable process of index 
  $2\!-\!d$. Furthermore, by standard mapping of \PRM s, $\sum_{s\ge 0\colon\tau^0_\mathbf{R}(s-)<\tau^0_\mathbf{R}(s)}\!\delta_{\lambda_s}$ is a \PRM. 
  We will identify its intensity measure as a \BESQ[-2(1-d)] excursion measure by \cite[(3.1) First description]{PitmYor82}. 
  
  Specifically, this description requires us to check three points. 
  (i) Neither excursion measure charges the zero excursion. 
  (ii) Whether the hitting time $T_x$ of level $x$ by the excursion of $\mathbf{R}$ is finite or not is not affected by the time-change,
  and the associated rate under either excursion measure is proportional to $1/s(x)$ 
  where $s(x)=x^{2-d}$ is the common scale function of ${\tt BES}(d)$ and \BESQ[-2(1-d)], see e.g. \cite[(3.5) Examples]{PitmYor82}.   
  (iii) We will show that the pre-$T_x$ and post-$T_x$ processes are, as required. 
  The pre-$T_x$ part of the excursion of $\mathbf{R}$ is a ${\tt BES}_0(d)$ conditioned to stay positive, 
  i.e. a ${\tt BES}_0(4-d)$ (see again \cite[(3.5) Examples]{PitmYor82}). 
  The time-change relation \eqref{eq:excloctime} transforms this into a $\BESQ_0(4+2(1-d))$, by \cite[Proposition XI.(1.11)]{RevuzYor}, 
  which is a $\BESQ_0(-2(1-d))$ conditioned to stay positive, as required. 
  Bertoin \cite[bottom of p.\ 117]{Bertoin1990c} noted the corresponding time-change relation 
  for ${\tt BES}(d)$ and \BESQ[-2(1-d)] starting from $y$ stopped when hitting 0. This identifies the post-$T_x$ parts of the
  excursions and completes the proof.
\end{proof}

\begin{proof}[Proof of Theorems \ref{thm:Bertoin} and \ref{thm:diffusion_0}] This will follow from Proposition \ref{prop:Bertoin} because applying $\skewer(y,\cdot)$ to the scaffolding-and-spindles pair of the proposition yields an interval partition with blocks 
\begin{equation}\label{corrr}
  \lambda_s^{y-\mathbf{H}(\tau^0_\mathbf{R}(s-))}=2\mathbf{R}(t)=\lambda^y(t)\!-\!\lambda^y(t-)\quad\mbox{if }y\!=\!\mathbf{H}(t)\mbox{ and }t\!\in\!(\tau^0_\mathbf{R}(s-),\tau^0_\mathbf{R}(s)),
\end{equation}
  by \eqref{eq:excloctime} and \eqref{eq:loctime}. Since $\skewerP(y,\mathbf{N},\mathbf{X})$ is unaffected by (linear) changes of scaffolding time of
  $\mathbf{X}$ and $\mathbf{N}$, the process of Theorem \ref{thm:Bertoin} can be constructed as claimed in Theorem \ref{thm:diffusion_0}, when stopped at a time
  that corresponds to an inverse local time $\tau_{\mathbf{R},\mathbf{H}}^{(0,0)}(u)$ of $(\mathbf{R},\mathbf{H})$ at $(0,0)$ and that after time change by $\tau^0_\mathbf{R}$, is an inverse local time 
  of the ${\tt Stable}(2-d)$ process $\mathbf{H}\circ\tau^0_\mathbf{R}$. 
    
Let us work out the constant $c$ for which stopping $(\mathbf{X},\mathbf{N})$ at $\tau_{\mathbf{X}}^0(cu)$ yields the same initial distribution for the skewer process
as the stopped scaffolding-and-spindles pair constructed from $(\mathbf{R},\mathbf{H})$ stopped at $\tau_{\mathbf{R},\mathbf{H}}^{(0,0)}(u)$. 
We do this using the parts of \cite[Lemma 3.3]{Bertoin1990} and \cite[Proposition 3.2]{IPPB} that we recalled in Sections \ref{sec:Bertoin} and \ref{sec:prel} here. 
Specifically, the statistics of excursions of $(\mathbf{R},\mathbf{H})$ of $\mathbf{H}$-infima 
directly transfer to $\mathbf{H}\circ\tau_{\mathbf{R}}^0$-infima that correspond to $\mathbf{X}$-infima, which, by bi-clade reversibility (see 4.11 above) or the 
mid-bi-clade Markov property (see 4.15 above) have the same rates as $\mathbf{X}$-suprema in a bi-clade. 
But the rates of $\mathbf{H}$-infima and $\mathbf{X}$-suprema differ by the constant $c=2^\alpha=2^{1-d}$, 
hence $\mathbf{X}$ needs to run longer than $\mathbf{H}\circ\tau_{\mathbf{R}}^0$, by a factor of $c$, 
to achieve the same number of excursions exceeding any given level $y$.

  Finally, we note that the skewer process associated with $(\mathbf{N}|_{[0,\tau_{\mathbf{X}}^0(v)]\times\Exc},\mathbf{X}|_{[0,\tau_{\mathbf{X}}^0(v)]})$ is a diffusion by \cite[Theorem 1.4]{IPPA}, again as recalled in Section \ref{sec:prel} here. 
\end{proof}

A similar argument to work out $c$ can be based on the values of $\mathbf{R}$ when crossing $\mathbf{H}=0$ and the mass of the central spindle of $\mathbf{N}$ when crossing $\mathbf{X}=0$. Note, however, that these also differ by a factor of 2, by \eqref{eq:excloctime}. 
  
\begin{corollary}\label{cor:spindles} In Bertoin's setting, under the It\^o excursion measure of $\mathbf{R}$, the local time process of $\mathbf{H}$ has as its law a 
  Pitman--Yor excursion measure of \BESQ[-\!2(1\!-\!d)].
\end{corollary}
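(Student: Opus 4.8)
The plan is to read the corollary off Proposition~\ref{prop:Bertoin} using the uniqueness of the intensity of a Poisson random measure, together with the standard structure of It\^o excursion theory. Recall that the point measure $\mathbf{K}$ of excursions of $\mathbf{R}$ away from $0$, parametrised by the local time $L^0$ of $\mathbf{R}$ at $0$ in the normalisation for which the inverse local time is $\tau^0_\mathbf{R}$, is a $\PRM({\tt Leb}\otimes n_\mathbf{R})$ on $[0,\infty)\times\Exc$, where $n_\mathbf{R}$ is the corresponding It\^o excursion measure of $\mathbf{R}$. Let $\Phi$ be the measurable map sending a generic excursion $e$ of $\mathbf{R}$, of length $\zeta$ say, to the occupation-density local time process $\lambda=(\lambda^y,y\ge0)$ of the path $(\mathbf{H}(t)-\mathbf{H}(0),\,0\le t\le\zeta)$ obtained from $e$ through the decomposition $\mathbf{R}=\mathbf{B}-(1-d)\mathbf{H}$ of \eqref{besdec}. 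This $\Phi$ is $n_\mathbf{R}$-a.e.\ well defined and measurable precisely because the compensated integral defining $\mathbf{H}$ in \eqref{eq:defH} is finite under the Bessel excursion measure, as recalled in Section~\ref{sec:Bertoin}; and ``the law of the local time process of $\mathbf{H}$ under the It\^o excursion measure of $\mathbf{R}$'' means, by definition, the pushforward $\Phi_*n_\mathbf{R}$.

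First I would apply the mapping theorem for Poisson random measures to $\mathbf{K}$ under $\mathrm{id}\times\Phi$ (acting on the second coordinate only): this gives that $\sum_{s\ge0\colon\tau^0_\mathbf{R}(s-)<\tau^0_\mathbf{R}(s)}\delta_{\lambda_s}$, which is exactly the random measure appearing in Proposition~\ref{prop:Bertoin}, is a $\PRM({\tt Leb}\otimes\Phi_*n_\mathbf{R})$. On the other hand, Proposition~\ref{prop:Bertoin} identifies that same random measure as a $\PRM({\tt Leb}\otimes\overline\nu)$ with $\overline\nu$ a Pitman--Yor excursion measure of $\BESQ[-2(1-d)]$, and the ${\tt Leb}$ factor there is with respect to the very same parametrisation by $s$ via $\tau^0_\mathbf{R}$. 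Since the first-coordinate intensity ${\tt Leb}$ is $\sigma$-finite and non-atomic, the intensity of a Poisson random measure of product form ${\tt Leb}\otimes\mu$ determines the $\sigma$-finite second factor $\mu$; hence $\Phi_*n_\mathbf{R}=\overline\nu$, which is the assertion of the corollary.

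There is no genuine obstacle here beyond bookkeeping. The two points meriting care are: (a) confirming that $\Phi$ is $n_\mathbf{R}$-a.e.\ defined and measurable — reducing, as above, to the finiteness of \eqref{eq:defH} under the excursion measure, already established by Bertoin; and (b) phrasing the uniqueness step, i.e.\ that $\PRM({\tt Leb}\otimes\mu_1)$ and $\PRM({\tt Leb}\otimes\mu_2)$ have the same law only if $\mu_1=\mu_2$, which follows by equating Laplace functionals on product test functions $f(s)g(e)$ and letting $f$ be supported on ever shorter intervals. Alternatively, one can prove the corollary without invoking Proposition~\ref{prop:Bertoin} as a black box, by repeating its three-point check against the first description of $\overline\nu$ in \cite[(3.1)]{PitmYor82}: by \eqref{eq:excloctime} the $\mathbf{H}$-local-time process is $2\mathbf{R}$ run on the clock $dy=dt/2\mathbf{R}$, so its pre-$T_x$ part is a time-changed ${\tt BES}_0(4-d)$, hence a $\BESQ_0(4+2(1-d))$ by \cite[Proposition XI.(1.11)]{RevuzYor}, i.e.\ a $\BESQ_0(-2(1-d))$ conditioned to stay positive, while the post-$T_x$ part is handled by Bertoin's time-change relation for ${\tt BES}(d)$ and $\BESQ[-2(1-d)]$ started from $y$ and killed at $0$.
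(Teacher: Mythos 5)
Your proposal is correct and is essentially the paper's own argument: the corollary is read directly off Proposition~\ref{prop:Bertoin}, whose proof already identifies the intensity measure of the mapped excursion \PRM\ (the image of the It\^o measure under excursion $\mapsto$ $\mathbf{H}$-local-time process) with a Pitman--Yor excursion measure of \BESQ[-2(1-d)] via the three-point check against \cite[(3.1)]{PitmYor82}. Your explicit appeal to the mapping theorem and to uniqueness of the second factor in a ${\tt Leb}\otimes\mu$ intensity merely makes that deduction precise, and your ``alternative'' route is exactly the proof of Proposition~\ref{prop:Bertoin} itself.
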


Proposition \ref{prop:Bertoin} makes precise the sense 
in which the framework of a single Bessel process $\mathbf{R}\sim{\tt BES}_0(d)$ of \cite{Bertoin1990,Bertoin1990c}, 
via $(\mathbf{R},\mathbf{H})$, 
yields the scaffolding-and-spindles framework $(\mathbf{N},\mathbf{X})$ of \cite{IPPA,IPPB}. 
The main step in the proof is time-changing the excursions of $\mathbf{R}$ away from 0 to form ${\tt BESQ}(-2(1-d))$ spindles. 
Let us invert this time-change and construct $\mathbf{R}$ from the spindles of $\mathbf{N}$. 
To this end, recall our notation $\nu$ for the Pitman--Yor excursion measure of ${\tt BESQ}(-2\alpha)$ of Section \ref{sec:prel}. 

\begin{proposition}\label{prop:reverse} For $\mathbf{N}=\sum_{i\in\mathbf{I}}\delta_{(\mathbf{s}_i,\mathbf{f}_i)}\sim{\tt PRM}({\tt Leb}\otimes\nu)$, 
  set $\zeta_i:=\int_0^{\zeta(\mathbf{f}_i)}\mathbf{f}_i(y)dy$ and
  $$\mathbf{e}_i(t):=\frac{1}{2}\mathbf{f}_i\left(\inf\left\{z\ge 0\colon\int_0^z\mathbf{f}_i(y)dy>t\right\}\right),\quad 
      t\in\left[0,\zeta_i\right),\quad\mbox{and}\quad\mathbf{e}_i(\zeta_i)=0.
  $$
  Then $\sum_{i\in\mathbf{I}}\delta_{(\mathbf{s}_i,\mathbf{e}_i)}$ has the same distribution as the It\^o excursion process $\mathbf{K}$ of 
  $\mathbf{R}\sim{\tt BES}_0(d)$, up to a linear time change, with $d=1-\alpha$. 
  In particular, the $\mathbf{e}_i$ can be stitched together in the order of the $\mathbf{s}_i$, $i\in\mathbf{I}$, to yield a process $\overline{\mathbf{R}}\sim{\tt BES}_0(1-\alpha)$.   
\end{proposition}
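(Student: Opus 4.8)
The plan is to transport everything through the coupling already recorded in Proposition~\ref{prop:Bertoin}, so that no new excursion-measure computation is needed. By that proposition, up to a linear time change of its first (local-time) coordinate, $\mathbf{N}=\sum_{i\in\mathbf{I}}\delta_{(\mathbf{s}_i,\mathbf{f}_i)}$ has the same law as $\sum_{s\ge 0\colon\tau^0_\mathbf{R}(s-)<\tau^0_\mathbf{R}(s)}\delta_{(s,\lambda_s)}$ built from $\mathbf{R}\sim{\tt BES}_0(1-\alpha)$ as in Section~\ref{sec:new}, where on the excursion interval $(\ell,r):=(\tau^0_\mathbf{R}(s-),\tau^0_\mathbf{R}(s))$ the mark $\lambda_s$ is the occupation-density local time of $\mathbf{H}$ and satisfies the time-change identity \eqref{eq:excloctime}, namely $\lambda_s^{\mathbf{H}(\ell+t)-\mathbf{H}(\ell)}=2\mathbf{R}(\ell+t)$ for $0\le t\le r-\ell$. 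It therefore suffices to show that the map $\mathbf{f}\mapsto\mathbf{e}$ of the proposition, applied to $\lambda_s$, returns exactly the $\mathbf{R}$-excursion $(\mathbf{R}(\ell+t),\,0\le t\le r-\ell)$; for then $\sum_i\delta_{(\mathbf{s}_i,\mathbf{e}_i)}$ coincides, up to the same linear time change, with the It\^o excursion process $\mathbf{K}$ of ${\tt BES}_0(1-\alpha)$.

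The heart of the matter is a change of variables showing that $\mathbf{f}\mapsto\mathbf{e}$ inverts \eqref{eq:excloctime}. On $(\ell,r)$ the process $\mathbf{R}$ is strictly positive, and by \eqref{eq:defH} the increment $g(t):=\mathbf{H}(\ell+t)-\mathbf{H}(\ell)=\int_0^t du/2\mathbf{R}(\ell+u)$ defines a continuous, strictly increasing bijection of $[0,r-\ell]$ onto $[0,\zeta(\lambda_s)]$, the finiteness of $\zeta(\lambda_s)=g(r-\ell)$ being exactly the finiteness of the singular integral $\int_0^\cdot du/\mathbf{R}$ at the excursion end recalled in Section~\ref{sec:Bertoin}. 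Substituting $y=g(u)$ in $\int_0^z\lambda_s^y\,dy=\int_0^z 2\mathbf{R}(\ell+g^{-1}(y))\,dy$ and using $g'=1/2\mathbf{R}(\ell+\cdot)$ gives $\int_0^z\lambda_s^y\,dy=g^{-1}(z)$ for all $z\in[0,\zeta(\lambda_s)]$. Hence the time change $\tau(t)=\inf\{z\ge 0\colon\int_0^z\lambda_s^y\,dy>t\}$ appearing in the definition of $\mathbf{e}$ equals $g(t)$, so $\mathbf{e}(t)=\tfrac12\lambda_s^{\tau(t)}=\tfrac12\lambda_s^{g(t)}=\mathbf{R}(\ell+t)$ for $0\le t\le r-\ell$ by \eqref{eq:excloctime}, and in particular $\zeta(\mathbf{e})=\int_0^{\zeta(\lambda_s)}\lambda_s^y\,dy=r-\ell$ matches the length of the $\mathbf{R}$-excursion. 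This proves the first assertion.

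Once $\sum_i\delta_{(\mathbf{s}_i,\mathbf{e}_i)}$ is known to have, up to a linear time change of the local-time index, the law of the It\^o excursion process of ${\tt BES}_0(1-\alpha)$, all almost-sure properties transfer: the partial length sums $\sum_{\mathbf{s}_i\le t}\zeta_i$ are a.s.\ finite, being the inverse local time at $0$ of ${\tt BES}_0(1-\alpha)$, a stable subordinator, and since $0$ is instantaneously reflecting for ${\tt BES}(d)$, $d\in(0,1)$, there is no sojourn at $0$, so the rescaling of the local-time index is immaterial to the concatenation. The path-continuous process reflected at $0$ obtained by stitching the $\mathbf{e}_i$ in the order of the $\mathbf{s}_i$ then has the law obtained by concatenating the excursions of ${\tt BES}_0(1-\alpha)$, i.e.\ $\overline{\mathbf{R}}\sim{\tt BES}_0(1-\alpha)$. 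The only point requiring genuine care is the change of variables of the second paragraph: one must check that the excursions carrying $\nu$ (equivalently, those carrying the It\^o measure of ${\tt BES}_0(1-\alpha)$) are positive on their open interval with finite area and finite singular-integral functionals, so that $g$ is a bona fide bicontinuous strictly increasing bijection $\nu$-a.e.; all of this is already implicit in the proof of Proposition~\ref{prop:Bertoin}. An alternative route, bypassing the coupling, would be to identify the pushforward of $\nu$ under $\mathbf{f}\mapsto\mathbf{e}$ directly as the It\^o measure of ${\tt BES}_0(1-\alpha)$ by re-running, in reverse, the three-point check against Pitman--Yor's ``First description'' carried out in the proof of Proposition~\ref{prop:Bertoin}; this is longer but self-contained.
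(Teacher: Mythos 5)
Your proposal is correct and follows essentially the same route as the paper: both deduce the result from Proposition~\ref{prop:Bertoin} by observing that the map $\mathbf{f}_i\mapsto\mathbf{e}_i$ is the bijective inverse of the time change \eqref{eq:excloctime}, then invoke standard mapping of \PRM s, note that a linear time change of the local-time index only amounts to a different normalisation of local time, and stitch the excursions together to obtain $\overline{\mathbf{R}}\sim{\tt BES}_0(1-\alpha)$. Your explicit change-of-variables verification that $\int_0^z\lambda_s^y\,dy=g^{-1}(z)$, hence $\mathbf{e}(t)=\mathbf{R}(\ell+t)$, simply spells out a step the paper treats as elementary.
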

\begin{proof} This follows from Proposition \ref{prop:Bertoin}. Specifically, mapping $\mathbf{f}_i$ to $\mathbf{e}_i$ is elementary 
  since all $\mathbf{f}_i$ are continuous with compact support a.s.. In present notation, we can write \eqref{eq:excloctime} as
  $$\mathbf{f}_i\left(\int_0^t\frac{du}{\mathbf{e}_i(u)}\right)=2\mathbf{e}_i(t),\quad t\in[0,\zeta_i].$$
  This is a.s.\ well-defined for all $\mathbf{e}_i$, $i\in\mathbf{I}$, so the time-changes relating $\mathbf{f}_i$ and $\mathbf{e}_i$ are bijective, and hence the 
  associated \PRM s are bijectively related by standard mapping of \PRM s. In particular, we deduce the claimed distributional identities up to a linear time change.
  The construction of Markov processes from excursions has been well-studied \cite{Salisbury1986a}. 
  Note that a linear time change of the \PRM\ has no effect on the ${\tt BES}_0(1-\alpha)$-excursions themselves. Specifically, we define
  $\overline{\tau}(s)=\sum_{i\in\mathbf{I}\colon\mathbf{s}_i\le s}\zeta_i$, $s\ge 0$, and $\overline{\mathbf{R}}(\overline{\tau}(\mathbf{s}_i-)+t)=\mathbf{e}_i(t)$, 
  $0\le t\le\zeta_i$, also setting $\overline{\mathbf{R}}(t)=0$ for $t\not\in\bigcup_{i\in\mathbf{I}}[\overline{\tau}(\mathbf{s}_i-),\overline{\tau}(\mathbf{s}_i)]$ and
  obtain $\overline{\mathbf{R}}\sim{\tt BES}_0(1\!-\!\alpha)$, and this is the same process as if we replace $\mathbf{s}_i$ by $a\mathbf{s}_i$, $i\in\mathbf{I}$, 
  throughout, $a\!>\!0$. The process $\overline{\tau}$ is an inverse local time of $\overline{\mathbf{R}}$ at 0, and replacing $\mathbf{s}_i$ by $a\mathbf{s}_i$ corresponds to
  a different choice of local time.
\end{proof}

\begin{corollary} For $(\mathbf{X},\mathbf{N})$ as in Section \ref{sec:prel} and notation $\overline{\mathbf{R}}$ as in Proposition \ref{prop:reverse}, with 
  $\overline{\tau}(s)=\sum_{i\in\mathbf{I}\colon\mathbf{s}_i\le s}\zeta_i$, $s\ge 0$, define $\overline{\mathbf{H}}$ on the range of $\overline{\tau}$ as 
  $\overline{\mathbf{H}}(\overline{\tau}(s)):=\mathbf{X}(s)$, $s\ge 0$, and outside the range of $\overline{\tau}$ as 
  $$\overline{\mathbf{H}}\left(\overline{\tau}(\mathbf{s}_i-)+\frac{1}{2}\int_0^z\mathbf{f}_i(y)dy\right):=\mathbf{X}(\mathbf{s}_i-)+z,\quad 0\le z<\Delta\mathbf{X}(\mathbf{s}_i)=\zeta(\mathbf{f}_i).$$
  Then the pair $(\overline{\mathbf{R}},\overline{\mathbf{H}})$ has the same distribution as $(\mathbf{R},\mathbf{H})$ of Section \ref{sec:Bertoin}.
\end{corollary}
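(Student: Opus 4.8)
The plan is to read the corollary as saying that the measurable map $\Psi\colon(\mathbf{N},\mathbf{X})\mapsto(\overline{\mathbf{R}},\overline{\mathbf{H}})$ spelled out in the statement inverts the map $\Phi$ that underlies Proposition~\ref{prop:Bertoin}, namely the map sending $(\mathbf{R},\mathbf{H})$ to the scaffolding-and-spindles pair $\big(\sum_s\delta_{(s,\lambda_s)},\,\mathbf{H}\circ\tau^0_{\mathbf{R}}\big)$. Proposition~\ref{prop:Bertoin} gives $(\mathbf{N},\mathbf{X})\stackrel{d}{=}\Phi(\mathbf{R},\mathbf{H})$ up to a linear time change, and --- as already observed at the end of the proof of Proposition~\ref{prop:reverse} --- a linear time change of the pair merely replaces $\mathbf{s}_i$ by $a\mathbf{s}_i$ throughout, which reparametrises $\overline{\tau}$ and $\mathbf{X}$ by the same factor and hence leaves $\overline{\mathbf{R}}$ (stitched from the $\mathbf{e}_i$ in unchanged order) and $\overline{\mathbf{H}}$ untouched. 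So $\Psi$ is invariant under such a time change, and it suffices to verify that $\Psi\circ\Phi$ is the identity on paths; then $(\overline{\mathbf{R}},\overline{\mathbf{H}})=\Psi(\mathbf{N},\mathbf{X})\stackrel{d}{=}\Psi\big(\Phi(\mathbf{R},\mathbf{H})\big)=(\mathbf{R},\mathbf{H})$.

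Before the computation I would collect what Proposition~\ref{prop:reverse} already supplies: $\overline{\mathbf{R}}\sim{\tt BES}_0(1-\alpha)$, obtained by stitching the excursions $\mathbf{e}_i$ --- time-changes of $\tfrac{1}{2}\mathbf{f}_i$ --- in the order of the $\mathbf{s}_i$, with $\overline{\tau}(s)=\sum_{\mathbf{s}_i\le s}\zeta_i$ an inverse local time of $\overline{\mathbf{R}}$ at $0$, so that the $i$th excursion of $\overline{\mathbf{R}}$ occupies the real-time interval $\big(\overline{\tau}(\mathbf{s}_i-),\overline{\tau}(\mathbf{s}_i)\big)$ of length $\zeta_i=\int_0^{\zeta(\mathbf{f}_i)}\mathbf{f}_i(y)\,dy$. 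One then checks that $\overline{\mathbf{H}}$ is a genuine path-continuous process: its prescription on the range of $\overline{\tau}$ and its prescription on the excursion intervals must agree at the shared points $\overline{\tau}(\mathbf{s}_i\pm)$, which amounts to the assertion that over the $i$th excursion interval the spindle-driven interpolation carries $\overline{\mathbf{H}}$ continuously from $\mathbf{X}(\mathbf{s}_i-)$ up to $\mathbf{X}(\mathbf{s}_i-)+\zeta(\mathbf{f}_i)=\mathbf{X}(\mathbf{s}_i)$ exactly as the real-time variable sweeps the interval of length $\zeta_i$ --- in other words, that the jump $\Delta\mathbf{X}(\mathbf{s}_i)$ of the scaffolding is exactly consumed by the $i$th block excursion. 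This is nothing but the time-change relation \eqref{eq:excloctime} read backwards, which is already the mechanism by which $\mathbf{e}_i$ and $\overline{\tau}$ were defined.

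The computation of $\Psi\circ\Phi$ is then path-by-path. Feeding $(\mathbf{R},\mathbf{H})$ through $\Phi$ produces the spindles $\lambda_s$ and the scaffolding $\mathbf{H}\circ\tau^0_{\mathbf{R}}$; on each excursion interval $(\ell_s,r_s)=(\tau^0_{\mathbf{R}}(s-),\tau^0_{\mathbf{R}}(s))$, relation \eqref{eq:excloctime} identifies $\lambda_s$ as the occupation density of $t\mapsto\mathbf{H}(\ell_s+t)-\mathbf{H}(\ell_s)$ with $\lambda_s^{\mathbf{H}(\ell_s+t)-\mathbf{H}(\ell_s)}=2\mathbf{R}(\ell_s+t)$, so that $\zeta_s=\int_0^{\zeta(\lambda_s)}\lambda_s(y)\,dy=r_s-\ell_s$, hence $\overline{\tau}=\tau^0_{\mathbf{R}}$, and inverting the time change in the definition of $\mathbf{e}_s$ gives $\mathbf{e}_s(t)=\mathbf{R}(\ell_s+t)$; thus $\overline{\mathbf{R}}=\mathbf{R}$. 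For the second coordinate, $\overline{\mathbf{H}}(\overline{\tau}(s))=(\mathbf{H}\circ\tau^0_{\mathbf{R}})(s)=\mathbf{H}(\tau^0_{\mathbf{R}}(s))$ is immediate from the definition of the scaffolding in $\Phi$, so $\overline{\mathbf{H}}=\mathbf{H}$ on the (Lebesgue-null) zero set of $\mathbf{R}$; and on each excursion interval the defining prescription for $\overline{\mathbf{H}}$ there reduces, after the time change relating $\mathbf{e}_s$ to $\lambda_s$, to $\overline{\mathbf{H}}(\ell_s+t)=\mathbf{H}(\ell_s+t)$. Hence $\overline{\mathbf{H}}=\mathbf{H}$ everywhere, $\Psi\circ\Phi=\mathrm{id}$, and the corollary follows.

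I do not anticipate a single hard step; the one place that genuinely needs care is the glueing in the second paragraph --- making precise that the two prescriptions for $\overline{\mathbf{H}}$ fit together into one path-continuous process, equivalently that $\mathbf{X}$'s jump at $\mathbf{s}_i$ is neither too large nor too small for the real-time interval of length $\zeta_i=\int_0^{\zeta(\mathbf{f}_i)}\mathbf{f}_i(y)\,dy$ allotted to the $i$th block excursion --- which is exactly \eqref{eq:excloctime} read backwards and is already underpinning Proposition~\ref{prop:reverse}. Note that continuity of $\overline{\mathbf{H}}$ need not be argued in isolation: once $\Psi\circ\Phi=\mathrm{id}$ has been established path-by-path, where $\mathbf{H}$ is continuous, continuity transfers to $\overline{\mathbf{H}}=\Psi(\mathbf{N},\mathbf{X})$ through the distributional identity. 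The remaining points --- measurability of $\Psi$, the almost sure regularity of the excursions $\mathbf{f}_i$, and the bookkeeping of the linear time change --- are either routine or already recorded above.
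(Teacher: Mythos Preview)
Your approach is correct and takes a genuinely different route from the paper's. The paper observes that $\mathbf{H}$ is a deterministic functional of $\mathbf{R}$ via the compensated local-time formula \eqref{eq:defH}; since $\overline{\mathbf{R}}\stackrel{d}{=}\mathbf{R}$ is already provided by Proposition~\ref{prop:reverse}, it then suffices to verify that $\overline{\mathbf{H}}$ is the \emph{same} functional of $\overline{\mathbf{R}}$. This forces the paper to identify $\mathbf{X}(s)$ with the compensated limit $\int_0^\infty a^{d-2}(L^a(\overline{\tau}(s))-L^0(\overline{\tau}(s)))\,da$ built from the local times of $\overline{\mathbf{R}}$ --- a somewhat delicate step involving uniform convergence. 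Your argument sidesteps this entirely: by checking $\Psi\circ\Phi=\mathrm{id}$ pathwise, the identity $\overline{\mathbf{H}}(\tau^0_{\mathbf{R}}(s))=\mathbf{H}(\tau^0_{\mathbf{R}}(s))$ on the zero set of $\mathbf{R}$ is immediate from the definition of the scaffolding as $\mathbf{H}\circ\tau^0_{\mathbf{R}}$, and the compensation never needs to be unpacked. The price you pay is the need to argue invariance of $\Psi$ under the linear time change, but as you note this is straightforward. On the excursion intervals the two proofs perform essentially the same time-change bookkeeping, so the genuine saving is confined to the range of $\overline{\tau}$. Either way the result follows; your route is marginally cleaner.
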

\begin{proof} Since $\mathbf{H}$ is determined by $\mathbf{R}$ via \eqref{eq:defH} and $\overline{\mathbf{R}}\stackrel{d}=\mathbf{R}$, it suffices to show that $\overline{\mathbf{H}}$ relates to 
  $\overline{\mathbf{R}}$ in the same way. Indeed, we have $\overline{\mathbf{H}}\circ\overline{\tau}=\mathbf{X}$, by construction, and $\mathbf{X}(s)$ is the
  compensated limit of its jumps $\Delta\mathbf{X}(\mathbf{s}_i)\!=\!\zeta(\mathbf{f}_i)$ for $i\!\in\!\mathbf{I}$ with $\mathbf{s}_i\!\le\! s$. But 
  $$\zeta(\mathbf{f}_i)=\int_0^{\zeta(\mathbf{e}_i)}\frac{du}{\mathbf{e}_i(u)}=\int_0^\infty a^{d-2}L^a_i(\infty)da,$$
  where $(a^{d-1}L^a_i(\infty),a\ge 0)$ is the continuous version of the total occupation density local time of $\mathbf{e}_i$ at level $a$. This entails that the right-most equality of 
  \eqref{eq:defH} holds for $t=\overline{\tau}(s)$, when $(\mathbf{R},\mathbf{H})$ is replaced by $(\overline{\mathbf{R}},\overline{\mathbf{H}})$. Since these 
  limits exist almost surely uniformly for $s$ in compact intervals, they also hold at $t=\overline{\tau}(\mathbf{s}_i-)$, $i\in\mathbf{I}$.
  Beyond the range of $\overline{\tau}$, we have, for each $i\in\mathbf{I}$,
  $$\overline{\mathbf{H}}\left(\overline{\tau}(\mathbf{s}_i-)+\frac{1}{2}\int_0^z\mathbf{f}_i(y)dy\right)=\overline{\mathbf{H}}(\overline{\tau}(\mathbf{s}_i-))+z,
      \quad 0\le z\le\zeta(\mathbf{f}_i).$$
  But according to the bijective time change relationships transforming $\mathbf{f}_i$ into $\mathbf{e}_i$ noted in the proof of Proposition \ref{prop:reverse}, we have
  $$z=\int_0^t\frac{du}{\mathbf{e}_i(u)}\quad\mbox{if and only if}\quad t=\frac{1}{2}\int_0^z\mathbf{f}_i(y)dy.$$
  Hence, we obtain  
  $$\overline{\mathbf{H}}(\overline{\tau}(\mathbf{s}_i-)+t)=\overline{\mathbf{H}}(\overline{\tau}(\mathbf{s}_i-))+\int_0^t\frac{du}{\mathbf{e}_i(u)},
      \quad 0\le t\le\zeta(\mathbf{e}_i),$$
  and this completes the proof.
\end{proof}

\section{Further consequences of the connection between \cite{Bertoin1990,Bertoin1990c} and \cite{IPPA,IPPB}}\label{sec:new2}

\begin{table}[b]
  \begin{center}
   \begin{tabular}{|c||c|c|c|c|c|c||c|c|c|c|}
    \hline
     &&&&&&&&&&\\[-0.3cm]
    \cite{IPPA}$\|$\cite{IPPB}                          &1.3 & 1.4 & 4.9 & 4.11 & 4.15&5.5& 1.2 &$\!$1.2, 1.3, 3.10$\!$&    1.4            &3.2\\[0.1cm]
    \hline
    &&&&&&&&&&\\[-0.3cm]
    \cite{Bertoin1990,Bertoin1990c} &II.1& II.2 &3.1 & 3.2 & 3.4 &   II.2& II.3&     II.2            &$\!$4.1, 4.2, II.4$\!$&3.3\\[0.1cm]
    \hline
  \end{tabular}
  \bigskip
  
  \caption{Each column lists pairs of results (or groups of results) from \cite{IPPA} or \cite{IPPB}, and from \cite{Bertoin1990,Bertoin1990c} that are analogues of each other.\label{table}}
  \end{center}
\end{table}

In the light of the results of Section \ref{sec:new}, the results of \cite{Bertoin1990,Bertoin1990c} and \cite{IPPA,IPPB} are closely related. Indeed, many results of \cite{Bertoin1990,Bertoin1990c} can now be deduced from \cite{IPPA,IPPB}, and the approach of \cite{Bertoin1990,Bertoin1990c} could be refined to handle the
additional order structure needed for the interval partitions of \cite{IPPA,IPPB}. Table \ref{table} pairs the analogous results, which will mostly have 
been evident already from the formulations in Sections \ref{sec:Bertoin} and \ref{sec:prel}.

One may note, however, that these results differ in detail, not just because $(\beta^y,y\!\ge\!0)$ and $(\mu^y_{[0,T]},y\!\ge\! 0)$ have different state spaces. Specifically,
\cite[Theorem 1.4]{IPPA} and \cite[Theorem 1.3]{IPPB} establish interval-partition-valued processes as path-continuous Hunt processes that are continuous in the
initial condition, while \cite[Theorem II.2]{Bertoin1990c} does not push beyond the simple Markov property. On the other hand,  
\cite[Proposition 2.4]{Bertoin1990} and \cite[Proposition 3.2]{IPPA} find stable inverse local times of different indices, but fundamentally play the same role, since they
provide the time parameterisations for the \PRM s of excursions of $(\mathbf{R},\mathbf{H})$ and of bi-clades, respectively. 

The observation of Corollary \ref{cor:spindles}, that $\mathbf{H}$-local time processes in $\mathbf{R}$-excursions are $\BESQ(-2(1-d))$, is related to 
\cite[Theorem I.5 or (0.3)]{Bertoin1990c}, which notes $\BESQ(-2(1-d))$ evolution of time-changed $\mathbf{R}$-excursions after they exceed previous 
$\mathbf{H}$-suprema. In the context of \cite{IPPA,IPPB}, the corresponding result is a consequence of the construction from $\BESQ(-2\alpha)$ spindles 
(and the Markov property). But \cite[Theorem I.5]{Bertoin1990c} goes further and yields the following result when translated into the framework of \cite{IPPA,IPPB}.

\begin{corollary} For each $y\ge 0$, let $T_\mathbf{X}^+(y)=\inf\{s\ge 0\colon\mathbf{X}(s)>y\}$ and denote by 
  $\mathbf{L}(y):=\mathbf{f}_{T_\mathbf{X}^+(y)}(y-\mathbf{X}(T_\mathbf{X}^+(y)-))$ the value of the left-most spindle $\mathbf{f}_{T_\mathbf{X}^+(y)}$ 
  that crosses level $y$. Then $(\mathbf{L}(y),y\!\ge\! 0)$ is a Markov process whose excursions away from 0 start with a jump of intensity 
  $(2^{\alpha}\alpha/\Gamma(1-\alpha))x^{-1-\alpha}dx$ and then evolve as $\BESQ_x(-2\alpha)$.
\end{corollary}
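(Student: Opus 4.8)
The plan is to recognise $(\mathbf{L}(y),y\ge0)$, up to the linear time-change of Proposition~\ref{prop:Bertoin}, as Bertoin's process $\widetilde{\mathbf{R}}=2\mathbf{R}\circ T_\mathbf{H}^+$ of Section~\ref{sec:Bertoin}, and then to read the excursion description off \cite[Theorems I.5 and I.6]{Bertoin1990c}.

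First I would pass to the coupling of Proposition~\ref{prop:Bertoin}: up to a linear time-change of scaffolding time, $(\mathbf{X},\mathbf{N})$ coincides with $\big(\mathbf{H}\circ\tau_\mathbf{R}^0,\sum_s\delta_{\lambda_s}\big)$, where for the $s$-th excursion interval $(\ell_s,r_s)=(\tau_\mathbf{R}^0(s-),\tau_\mathbf{R}^0(s))$ of $\mathbf{R}$ away from $0$ the spindle is the occupation-density local time $\lambda_s=(\lambda_s^z,z\ge0)$ of the increment $\mathbf{H}(\ell_s+\cdot)-\mathbf{H}(\ell_s)$. Since $\mathbf{X}(s-)=\mathbf{H}(\ell_s)$ and $\mathbf{X}(s)=\mathbf{H}(r_s)$, and since $\mathbf{H}$ is strictly increasing on each excursion interval and non-increasing on the zero set of $\mathbf{R}$, the index $s=T_\mathbf{X}^+(y)$ of the first jump of $\mathbf{X}$ above level $y$ is precisely the index of the first $\mathbf{R}$-excursion in which $\mathbf{H}$ reaches $y$, so that $\mathbf{H}(\ell_s)\le y<\mathbf{H}(r_s)$. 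Hence the left-most spindle crossing level $y$ is $\mathbf{f}_{T_\mathbf{X}^+(y)}=\lambda_s$, and evaluating the time-change relation \eqref{eq:excloctime} at the unique time in $(\ell_s,r_s)$ at which $\mathbf{H}$ equals $y$ — which is $T_\mathbf{H}^+(y)$, since $\mathbf{H}\le y$ on $[0,\ell_s]$ — gives
\[
  \mathbf{L}(y)=\lambda_s^{y-\mathbf{X}(s-)}=\lambda_s^{y-\mathbf{H}(\ell_s)}=2\mathbf{R}\big(T_\mathbf{H}^+(y)\big)=\widetilde{\mathbf{R}}(y).
\]
The linear time-change only reparametrises scaffolding time and leaves $\mathbf{L}$ unchanged, so $(\mathbf{L}(y),y\ge0)$ has the law of $(\widetilde{\mathbf{R}}(y),y\ge0)$ for the process $\widetilde{\mathbf{R}}$ of Section~\ref{sec:Bertoin} with $d=1-\alpha$.

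It then remains to quote Bertoin: by \cite[Theorem I.6]{Bertoin1990c} the process $\widetilde{\mathbf{R}}$ is Markov (with $0$ regular and recurrent), and by \cite[Theorem I.5]{Bertoin1990c}, recalled in Section~\ref{sec:Bertoin}, under its It\^o excursion measure an excursion starts positive with initial value at the sigma-finite rate $(2^{1-d}(1-d)/\Gamma(d))x^{d-2}dx$ and, started from $x$, evolves as ${\tt BESQ}_x(-2(1-d))$, i.e.\ it jumps in from $0$ to some $x>0$ and then follows ${\tt BESQ}_x(-2(1-d))$. Substituting $d=1-\alpha$, so that $2^{1-d}=2^\alpha$, $(1-d)/\Gamma(d)=\alpha/\Gamma(1-\alpha)$, $x^{d-2}=x^{-1-\alpha}$ and $-2(1-d)=-2\alpha$, this is exactly the assertion.

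The only step that needs care is the identification $\mathbf{L}(y)=\widetilde{\mathbf{R}}(y)$: that the spindle carried by the jump of $\mathbf{X}$ at $T_\mathbf{X}^+(y)$ is the one coming from the first $\mathbf{R}$-excursion that reaches level $y$, and that \eqref{eq:excloctime} must be read at the time $T_\mathbf{H}^+(y)$. This uses only the monotonicity of $\mathbf{H}$ recalled in Section~\ref{sec:Bertoin}; the exceptional levels $y$ at which $\mathbf{H}$ attains a running maximum exactly at an excursion endpoint form a Lebesgue-null set and do not affect a statement in law. Alternatively one could work entirely in the scaffolding-and-spindles picture, extracting the excursion law of $\mathbf{L}$ from the bi-clade structure of $\mathbf{X}$ near its supremum via \cite[Proposition 3.2]{IPPB} and the mid-bi-clade Markov property recalled in Section~\ref{sec:prel}, while keeping track of the factor $2$ between $\mathbf{R}$- and spindle-masses and the factor $2^\alpha$ between the rates of $\mathbf{H}$-infima and $\mathbf{X}$-suprema; but the route through \cite{Bertoin1990c} is the shortest.
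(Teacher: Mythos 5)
Your proposal is correct and follows exactly the route the paper intends: the corollary is stated as the translation of Bertoin's Theorems I.5--I.6 through the coupling of Proposition \ref{prop:Bertoin}, and your identification $\mathbf{L}(y)=\lambda_{s}^{y-\mathbf{H}(\ell_s)}=2\mathbf{R}(T_\mathbf{H}^+(y))=\widetilde{\mathbf{R}}(y)$ via \eqref{eq:excloctime}, followed by the substitution $d=1-\alpha$, is precisely the argument the paper leaves implicit. Note only that at the exceptional levels you set aside, both $\mathbf{L}(y)$ and $\widetilde{\mathbf{R}}(y)$ vanish under the coupling, so the identification in fact holds for all $y$ simultaneously and no null-set caveat is needed.
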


Similarly, \cite[Theorem I.6]{Bertoin1990c} then yields the semi-group of $\mathbf{L}$.

Less immediate are the consequences of some further results of \cite{IPPB}, which we have not stated in Section \ref{sec:prel}, about what we call pseudo-stationarity
of type-0 and type-1 evolutions, and the passage to normalised interval-partition evolutions on the subspace $\mathcal{I}_{H,1}$ of interval partitions via suitable 
time-change. 
We observe in the context of Bertoin \cite[Theorem II.2]{Bertoin1990c} that the marginal distributions of 
$\mu_{[0,T_\mathbf{H}(-1)]}^{-1+y}$, $y\in[0,1]$, a process starting from the zero measure $0\in\mathcal{N}((0,\infty))$, can be read from
\begin{equation}\label{pseudostat1}
  \widetilde{\kappa}_y^\mathcal{N}f_\varphi(0)=\frac{y^{d-1}}{y^{d-1}\!+\!\int_0^\infty(1\!-\!\varphi(s))\Pi_y(ds)}=\mathbb{E}\Bigg(\!f_\varphi\Bigg(\sum_{j\in\mathbf{J}\colon 0\le\mathbf{r}_j\le E_y}\!\delta_{\Delta\sigma_y(\mathbf{r}_j)}\!\Bigg)\!\Bigg),
\end{equation}
where $\Pi_y(ds)\!=\!((1\!-\!d)/\Gamma(d))s^{d-2}e^{-s/y}ds$ is the L\'evy measure of a subordinator $(\sigma_y(r),r\!\ge\! 0)$ with \PRM\  
$\sum_{j\in\mathbf{J}}\delta_{(\mathbf{r}_j,\Delta\sigma_y(\mathbf{r}_j))}$ of its jumps, and $E_y\sim{\tt Exp}(y^{d-1})$ is an independent random variable. Now \cite[Proposition 21]{PitmYorPDAT} showed, for $y=1$, that the decreasing rearrangement  
$(\sigma_y(E_y))^{-1}(\Delta\sigma_y(t),0\le t\le E_y)^\downarrow$ of normalised jump sizes of $\sigma_y|_{[0,E_y]}$ has Poisson--Dirichlet 
distribution ${\tt PD}(1-d,1-d)$, and is independent of $\sigma_y(E_y)\sim{\tt Gamma}(1-d,1/y)$, and a simple change of variables extends this to all $y>0$. As ${\tt PD}(1-d,1-d)$ is preserved at all times $y>0$ (while ${\tt Gamma}(1-d,1/y)$ depends on $y>0$), we call this behaviour
\em pseudo-stationarity\em, cf. \cite[Theorem 1.5]{IPPB}. 

Furthermore, it is well-known that adding an independent $\mathbf{R}(0)\sim{\tt Gamma}(d,1/y)$ variable to the jumps of 
$\sigma_y|_{[0,E_y]}$, we obtain $\mathbf{R}(0)+\sigma_y(E_y)\sim{\tt Exp}(1/y)$ independent of
$(\mathbf{R}(0)+\sigma_y(E_y))^{-1}(\mathbf{R}(0);\Delta\sigma_y(t),0\le t\le E_y)^\downarrow\sim{\tt PD}(1-d,0)$, and there is 
pseudo-stationarity in the following sense.

\begin{theorem}\label{pseudo} Let $\mathbf{R}$ be a ${\tt BES}(d)$ starting from $\mathbf{R}(0)\sim{\tt Gamma}(d,\rho)$ and let
  $\mathbf{Q}$ be a ${\tt BESQ}(0)$ starting from $\mathbf{Q}(0)\sim{\tt Exp}(\rho/2)$ independent of 
  $(\mathbf{x}_i,i\ge 1)\sim{\tt PD}(1-d,0)$. Then $\mu_{[0,T_{\mathbf{H}}(-1/\rho)]}^y$ has the same distribution as
  $\sum_{i\ge 1}\delta_{\mathbf{x}_i\mathbf{Q}(y)/2}$, for each fixed $y>0$.  
\end{theorem}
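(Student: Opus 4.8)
The plan is to recognise the process in question as an interval-partition evolution of \cite{IPPA,IPPB} started from a pseudo-stationary initial law, and to conclude by the pseudo-stationarity of those evolutions. Concretely, by \eqref{eq:excloctime}--\eqref{corrr} the map halving all masses sends $(\mu^y_{[0,T_\mathbf{H}(-1/\rho)]},y\ge0)$ to the skewer process whose level-$y$ blocks are the $\lambda^y(t)-\lambda^y(t-)=2\mathbf{R}(t)$; and, extending the correspondence of Section \ref{sec:new} so that the first Bessel excursion (started from $\mathbf{R}(0)$, and time-changing to a ${\tt BESQ}_{2\mathbf{R}(0)}(-2\alpha)$ spindle exactly as in the clade construction preceding Theorem \ref{thm:diffusion_0}) plays the role of the initial spindle, this skewer process is a type-1 evolution in the sense of \cite{IPPA}; in particular it is Markovian in $y$ with Bertoin's semigroup $\kappa^{\mathcal{N}}$ of result II.2, which is the type-1 semigroup $\kappa^{\mathcal{I}}$ of \cite[Theorem 1.2]{IPPB}. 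So it suffices to (a) identify the law of $\mu^0:=\mu^0_{[0,T_\mathbf{H}(-1/\rho)]}$ and (b) propagate it through this semigroup.

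For (a) I would split $(\mathbf{R},\mathbf{H})$ at $\sigma_1:=T_\mathbf{R}(0)$, the end of the first $\mathbf{R}$-excursion, and set $h_1:=\mathbf{H}(\sigma_1)>0$. Since $\mathbf{H}$ is strictly increasing on $(0,\sigma_1)$, the first excursion meets the line $\mathbf{H}=0$ only at time $0$ and thus contributes exactly the single atom $\delta_{\mathbf{R}(0)}$ (with $\mathbf{R}(0)\sim{\tt Gamma}(d,\rho)$) to $\mu^0$. By the strong Markov property of $(\mathbf{R},\mathbf{H})$, the post-$\sigma_1$ path is, given $h_1$, an independent copy of the standard $(\mathbf{R},\mathbf{H})$ started from $(0,0)$ with its second coordinate shifted up by $h_1$, run until that coordinate reaches $-1/\rho$; its contribution to $\mu^0$ is therefore $\mu^{-h_1}_{[0,T_\mathbf{H}(-1/\rho-h_1)]}$ of a standard Bertoin process. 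The key point is that, by Bessel scaling applied to Bertoin's result II.2(iii) at passage level $-(1/\rho+h_1)$, and after accounting for the induced space-scaling, this quantity has (regardless of $h_1$) the law $\widetilde{\kappa}^{\mathcal{N}}_{1/\rho}$ applied to the zero measure --- by \eqref{pseudostat1} and \cite[Proposition 21]{PitmYorPDAT} (with the change of variables extending it from $y=1$ to $y=1/\rho$), a scaled ${\tt PD}(1-d,1-d)$ whose total mass is ${\tt Gamma}(1-d,\rho)$ and whose normalised ranked sizes are an independent ${\tt PD}(1-d,1-d)$. Adding the independent ${\tt Gamma}(d,\rho)$-atom $\delta_{\mathbf{R}(0)}$ and invoking the elementary identity recalled just before the theorem (the total becomes ${\tt Gamma}(d,\rho)+{\tt Gamma}(1-d,\rho)={\tt Exp}(\rho)$, with normalised ranked sizes ${\tt PD}(1-d,0)$) gives $\mu^0\stackrel{d}{=}\sum_{i\ge1}\delta_{\mathbf{x}_i\mathbf{Q}(0)/2}$, using $2\cdot{\tt Exp}(\rho)\stackrel{d}{=}{\tt Exp}(\rho/2)$.

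For (b), I would invoke the pseudo-stationarity of type-1 evolutions, \cite[Theorem 1.5]{IPPB}, together with \cite[Theorem 1.4(i)]{IPPB}: a type-1 evolution started from a scaled ${\tt PD}(1-d,0)={\tt PD}(\alpha,0)$ has, at every fixed level $y$, again a scaled ${\tt PD}(\alpha,0)$, and its scale (the total mass) evolves as a ${\tt BESQ}(0)$. Applied with initial scale ${\tt Exp}(\rho/2)$, this means that the ranked blocks of $\beta^y$ are $\mathbf{Q}(y)\,(\mathbf{x}_i)_{i\ge1}$ in distribution, where $\mathbf{Q}$ is the ${\tt BESQ}(0)$ from $\mathbf{Q}(0)\sim{\tt Exp}(\rho/2)$ and $(\mathbf{x}_i)\sim{\tt PD}(\alpha,0)$ is independent; halving masses, $\mu^y_{[0,T_\mathbf{H}(-1/\rho)]}\stackrel{d}{=}\sum_{i\ge1}\delta_{\mathbf{x}_i\mathbf{Q}(y)/2}$, as claimed.

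The main obstacle is step (a): making the decomposition of $\mu^0$ rigorous and, above all, seeing that the dependence on the overshoot $h_1$ cancels. The particular pairing of a ${\tt Gamma}(d,\rho)$ starting value with the passage level $-1/\rho$ is precisely what makes $c\cdot\widetilde{\kappa}^{\mathcal{N}}_{1/(\rho c)}(0)$, with $c=1/\rho+h_1$, independent of $c$, so tracking the scaling constants (and checking that Bertoin's Markov and semigroup statements, proved in \cite{Bertoin1990c} for $\mathbf{R}(0)=0$ and $T=T_\mathbf{H}(-1)$, transfer to the present starting value and stopping time, e.g.\ via scaling, the strong Markov property, and the clade reading of the first excursion) is where the care is needed. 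One could instead argue (b) by a direct computation of the law of $\mu^y$ along the lines of (a), but the pseudo-stationarity route is shorter once (a) is in hand.
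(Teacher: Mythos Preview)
Your plan matches the paper's: pass to the scaffolding-and-spindles pair $(\mathbf{N}_0,\mathbf{X}_0)=(\delta_{(0,\mathbf{f}_0)}+\mathbf{N},\,\zeta(\mathbf{f}_0)+\mathbf{X})$ with leading spindle $\mathbf{f}_0\sim\BESQ_{2\mathbf{R}(0)}(-2\alpha)$, identify the level-$0$ state, and then invoke \cite[Theorem 1.5]{IPPB} together with the coupling $\mu^y=\phi(\beta^y)$ of \eqref{coupling}. Two points are worth noting.

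\emph{Where to split in step (a).} You split at $\sigma_1=T_\mathbf{R}(0)$, i.e.\ at scaffolding time $0$, so the post-part is a fresh stable started from the random level $h_1$, and you then need the scaling cancellation you flag as the ``main obstacle''. The paper instead applies the strong Markov property of $(\mathbf{N}_0,\mathbf{X}_0)$ at $S_{\mathbf{X}_0}(0)$: on $[0,S_{\mathbf{X}_0}(0)]$ only the initial spindle crosses level $0$ (contributing the leftmost block $2\mathbf{R}(0)\sim\GammaDist[d,\rho/2]$), and the post-part is a fresh stable started at $0$ and stopped at $-1/\rho$, independent of everything before. Its level-$0$ skewer is, by \cite[Theorem 1.3 and Proposition 3.10]{IPPB}, a $\PDIP[\alpha,\alpha]$ scaled by an independent $\GammaDist[\alpha,\rho/2]$. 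This avoids the $h_1$-computation entirely and, more importantly, delivers the \emph{ordered} initial law $\beta^0\sim\PDIP[\alpha,0]$ scaled by $\ExpDist[\rho/2]$.

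\emph{A gap in step (b).} Your step (a) only identifies the unordered point measure $\mu^0$, yet in (b) you invoke \cite[Theorem 1.5]{IPPB}, whose hypothesis is that the type-1 evolution starts from a scaled $\PDIP[\alpha,0]$, not merely that the ranked masses are $\PoiDir[\alpha,0]$. To close this you must either upgrade (a) to the interval-partition statement (which your own decomposition gives once you observe that the post-$\sigma_1$ scaffolding contributes nothing to level $0$ before $S_{\mathbf{X}_0}(0)$, reducing to the paper's split), or argue via Dynkin's criterion that $\phi(\beta^y)$ depends only on $\phi(\beta^0)$ (as in the proof of the subsequent corollary) and then choose a $\PDIP[\alpha,0]$ representative. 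Either route is short; without one of them the appeal to pseudo-stationarity is not yet justified. (Also, a small slip: doubling, not halving, sends the atoms of $\mu^y$ to the block sizes of $\beta^y$.)
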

\begin{proof} Let $\mathbf{f}_0:=\lambda_0=(\lambda_0^y,y\ge0)$ be the local time process of $(\mathbf{H}(t),0\le t\le T_{\mathbf{R}}(0))$. 
  Then $\mathbf{f}_0(y)=\widetilde{\mathbf{R}}(y)$ for all $0\le y<\mathbf{H}(T_\mathbf{R}(0))=\zeta(\mathbf{f}_0)$, 
  and by \cite[Theorem I.5]{Bertoin1990c}, $\mathbf{f}_0$ is a ${\tt BESQ}(-2(1-d))$,  
  starting from $2\mathbf{R}(0)\sim{\tt Gamma}(d,\rho/2)$. 
  Proceeding as in Proposition \ref{prop:Bertoin}, where $\mathbf{R}(0)=0$, we obtain here, 
  after the linear time-change noted in that proposition, a point measure $\mathbf{N}_0:=\delta_{(0,\mathbf{f}_0)}+\mathbf{N}$, in which 
  $\mathbf{N}\sim{\tt PRM}({\tt Leb}\otimes\nu)$ is independent of $\mathbf{f}_0\sim{\tt BESQ}_{2\mathbf{R}(0)}(-2(1-d))$ and scaffolding 
  $\mathbf{X}_0:=\zeta(\mathbf{f}_0)+\mathbf{X}$.
  
  By the strong Markov property of $(\mathbf{R},\mathbf{H})$ at $T_\mathbf{H}(0)$, and by \eqref{pseudostat1} with $y=1/\rho$, we find that 
  $\mu_{[0,T_{\mathbf{H}}(-1/\rho)]}^0$ has the claimed initial distribution.   
  Similarly, but now based on the strong Markov property of $(\mathbf{N}_0,\mathbf{X}_0)$ at $S_{\mathbf{X}_0}(0)$ 
  and on \cite[Theorem 1.3]{IPPB}, $(\mathbf{N}_0,\mathbf{X}_0)$ correspondingly 
  stopped at $S_{\mathbf{X}_0}(-1/\rho)$, has a skewer process $(\beta^y,y\!\ge\! 0)$ starting from a ${\tt PDIP}(\alpha,0)$ 
  scaled by the independent ${\tt Exp}(\rho/2)$. By \cite[Theorem 1.5]{IPPB}, $(\beta^y,y\!\ge\!0)$ is pseudo-stationary
  with $\beta^y$ distributed as a ${\tt PDIP}(\alpha,0)$ scaled by an independent $\mathbf{Q}(y)$, where $\mathbf{Q}$ is a ${\tt BESQ}(0)$
  starting from $\mathbf{Q}(0)\sim{\tt Exp}(\rho/2)$.
  
  But as $(\mathbf{N}_0,\mathbf{X}_0)$ has been constructed from $(\mathbf{R},\mathbf{H})$ as Proposition \ref{prop:Bertoin} did for the proof of Theorems \ref{thm:Bertoin} 
  and \ref{thm:diffusion_0}, we read from \eqref{corrr} the coupling
  \begin{equation}\label{coupling}\mu_{[0,T_\mathbf{H}(-1/\rho)]}^y=\phi(\beta^y),\qquad\mbox{where }\phi(\beta)=\sum_{V\in\beta}\delta_{{\tt Leb}(V)/2},
  \end{equation}
  so the distribution of $\mu_{[0,T_\mathbf{H}(-1/\rho)]}^y$ follows from the distribution of the ranked sequence of interval lengths of the pseudo-stationary $\beta^y$, which are
  ${\tt PD}(\alpha,0)$ scaled by independent $\mathbf{Q}(y)$, as required.
\end{proof}

In the light of this coupling \eqref{coupling}, \cite[Theorem 1.6]{IPPB} has the following corollary. Let
$\mathcal{N}_1((0,\infty)):=\big\{\sum_{i\in I}\delta_{x_i}\in\mathcal{N}((0,\infty))\colon\sum_{i\in I}x_i=1\big\}$ and consider the map
$\mu=\sum_{i\in I}\delta_{x_i}\mapsto\overline{\mu}:=\sum_{i\in I}\delta_{x_i/\sum_{j\in I}x_j}$ from $\mathcal{N}((0,\infty))\setminus\{0\}$ to $\mathcal{N}_1((0,\infty))$. 

\begin{corollary} Let $\mathbf{R}$ be as in Theorem \ref{pseudo}, set $T:=T_\mathbf{H}(-1)$ and consider the time-change
  $$\varrho(u):=\inf\left\{y\ge 0\colon\int_0^y\frac{dz}{\lambda^z(T)}>u\right\},\qquad u\ge 0.$$
  Then the process $\Big(\overline{\mu}_{[0,T]}^{\varrho(u)}, u\!\ge\! 0\Big)$, obtained from $\big(\mu_{[0,T]}^y,y\!\ge\!0\big)$ by
  first time-changing by $\varrho$ and then mapping under $\mu\mapsto\overline{\mu}$, is a stationary Markov process whose invariant distribution 
  is the distribution of $\sum_{i\ge 1}\delta_{\mathbf{x}_i}$ for $\big(\mathbf{x}_i,i\ge 1\big)\sim{\tt PD}(\alpha,0)$.
\end{corollary}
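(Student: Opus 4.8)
The plan is to transport everything through the coupling \eqref{coupling} to the type-0 evolution $(\beta^y,y\ge 0)$ appearing in the proof of Theorem~\ref{pseudo}, and then to invoke the de-Poissonisation result \cite[Theorem 1.6]{IPPB}. Taking $\rho=1$ in \eqref{coupling}, we have $T=T_\mathbf{H}(-1)$ and $\mu_{[0,T]}^y=\phi(\beta^y)$ for all $y\ge 0$, where $\phi(\beta)=\sum_{V\in\beta}\delta_{{\tt Leb}(V)/2}$ and, as shown there, $(\beta^y,y\ge 0)$ is a type-0 evolution started from a ${\tt PDIP}(\alpha,0)$ scaled by an independent ${\tt Exp}(1/2)$. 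First I would pin down the time-change: by \eqref{corrr} and \eqref{eq:loctime} the block lengths of $\beta^y$ are the values $2\mathbf{R}(t)$ over times $t$ with $\mathbf{H}(t)=y$, so $\IPmag{\beta^y}=\lambda^y(T)$, and hence
\[
  \int_0^y\frac{dz}{\lambda^z(T)}=\int_0^y\frac{dz}{\IPmag{\beta^z}},\qquad y\ge 0;
\]
thus $\varrho$ is exactly the time-change used in \cite[Theorem 1.6]{IPPB} for the evolution $\beta$.

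Next I would observe that the factor $\tfrac12$ in $\phi$ is invisible to normalisation. Writing $\overline{\beta}$ for the normalisation (total mass $1$) of an interval partition $\beta$ of positive total mass and $\mathrm{pm}(\gamma):=\sum_{V\in\gamma}\delta_{{\tt Leb}(V)}$ for the point measure of block sizes of $\gamma\in\mathcal{I}_{H,1}$, one checks directly that $\overline{\mu}_{[0,T]}^{\,y}=\overline{\phi(\beta^y)}=\mathrm{pm}(\overline{\beta^y})$, and therefore $\overline{\mu}_{[0,T]}^{\,\varrho(u)}=\mathrm{pm}(\overline{\beta^{\varrho(u)}})$ for all $u\ge 0$. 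By \cite[Theorem 1.6]{IPPB}, $\big(\overline{\beta^{\varrho(u)}},u\ge 0\big)$ is a (path-continuous) Markov process on $(\mathcal{I}_{H,1},d_H)$ with invariant law ${\tt PDIP}(\alpha,0)$; since $\overline{\beta^0}\sim{\tt PDIP}(\alpha,0)$ by construction, this process starts from its invariant distribution and is stationary.

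It then remains to push this process forward under $\mathrm{pm}$, and this is where the only real work lies. The point is that $\mathrm{pm}$ intertwines the normalised type-0 semigroup with a Markov kernel on $\mathcal{N}_1((0,\infty))$: from the block-by-block description of the type-0 and type-1 transition kernels in \cite[Theorems 1.2--1.3]{IPPB}, the image partition is assembled by concatenating, in the left-to-right order of the blocks $V$ of the current state, independent contributions each depending on $V$ only through ${\tt Leb}(V)$, together with a leading ${\tt PDIP}(\alpha,\alpha)$-type piece carrying its own independent scaling; forgetting this order, the resulting multiset of block sizes is a function of the current state only through $\mathrm{pm}$. Consequently $\mathrm{pm}$ carries the stationary Markov process $\big(\overline{\beta^{\varrho(u)}},u\ge 0\big)$ to a stationary Markov process on $\mathcal{N}_1((0,\infty))$, and $\mathrm{pm}_*{\tt PDIP}(\alpha,0)$ is the law of $\sum_{i\ge 1}\delta_{\mathbf{x}_i}$ with $(\mathbf{x}_i,i\ge 1)\sim{\tt PD}(\alpha,0)$ by Pitman (cf.\ \cite{PitmYorPDAT}). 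This gives the claim.

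The main obstacle is precisely verifying this order-insensitivity of the type-0 dynamics under $\mathrm{pm}$; the rest is bookkeeping around the coupling \eqref{coupling} and the already-established \cite[Theorem 1.6]{IPPB}. If one prefers to sidestep it on the Markov-property side, one can instead invoke Bertoin's \cite[Theorem II.2]{Bertoin1990c}, which already gives that $(\mu^y_{[0,T]},y\ge 0)$ is Markov, and transfer through the coupling only the stationarity and the identification of the invariant law; but one then still needs the compatibility of the additive functional $y\mapsto\int_0^y dz/\lambda^z(T)$ and of the map $\mu\mapsto\overline{\mu}$ with that Markov property, so routing everything through \cite[Theorem 1.6]{IPPB} as above is cleaner.
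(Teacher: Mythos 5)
Your overall route is the paper's own: transport everything through the coupling \eqref{coupling} with $\rho=1$, identify $\lambda^y(T)=\IPmag{\beta^y}$ so that $\varrho$ is exactly the de-Poissonisation time-change of \cite[Theorem 1.6]{IPPB}, note that normalisation makes the factor $\tfrac12$ in $\phi$ irrelevant so that $\overline{\mu}^{\varrho(u)}_{[0,T]}$ is the point measure of block sizes of $\overline{\beta^{\varrho(u)}}$, and invoke \cite[Theorem 1.6]{IPPB} for stationarity and the invariant law, leaving the Markov property of the image under the order-forgetting map as the substantive step. All of this matches the paper.

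Where you diverge, and where your write-up is looser than the paper's, is precisely that Markov step. You assert that the order-forgetting map intertwines the \emph{normalised, time-changed} semigroup with a kernel on $\mathcal{N}_1((0,\infty))$, but you justify this only from the block-by-block description of the \emph{un-normalised} kernels $\kappa_y^{\mathcal{I}},\widetilde{\kappa}_y^{\mathcal{I}}$ in \cite[Theorems 1.2--1.3]{IPPB}. The de-Poissonised transition kernel is not one of those kernels: it involves the time-change through the whole intervening trajectory, so one-step order-insensitivity of $\kappa_y^{\mathcal{I}}$ does not by itself yield order-insensitivity of the de-Poissonised kernel. The paper closes exactly this point with a pathwise coupling: for $\beta,\gamma$ with the same unordered block sizes it builds the two evolutions from the same clades $(\mathbf{X}_V,\mathbf{N}_V)$ stitched in different orders, so the unordered block sizes, hence the total-mass paths, hence $\varrho$ and the normalisation, agree at all levels simultaneously, and Dynkin's criterion for the de-Poissonised process follows at once. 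Your argument is repairable in the same spirit (e.g.\ use your kernel-level intertwining to conclude that the full path law of the unordered block sizes of the raw evolution depends on the initial state only through its unordered block sizes, then observe that $\varrho$ and the normalisation are deterministic functionals of that path), but as written the passage from \cite[Theorems 1.2--1.3]{IPPB} to the normalised semigroup is a gap you need to fill; the paper's clade-reordering coupling is the cleaner way to do it.
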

\begin{proof} By \cite[Theorem 1.6]{IPPB}, the corresponding interval-partition-valued process is a stationary Markov process, whose invariant
  distribution is ${\tt PDIP}(\alpha,0)$. Specifically, for $\beta\!\in\!\mathcal{I}_H\!\setminus\!\{\emptyset\}$, let 
  $\overline{\beta}:=\{(a/\|\beta\|,b/\|\beta\|)\colon(a,b)\!\in\!\beta\}
    \in\mathcal{I}_{H,1}:=\{\gamma\!\in\!\mathcal{I}_H\colon\|\gamma\|\!=\!1\}$. 
  Recall that with the coupling \eqref{coupling}, for $\rho\!=\!1$, we have $\lambda^y(T)=\|\beta^y\|$ for all $y\ge 0$. Then 
  $$\overline{\mu}^{\varrho(u)}_{[0,T]}=\sum_{V\in\beta^{\varrho(u)}}\delta_{{\tt Leb}(V)/\|\beta^{\varrho(u)}\|}
                                                         =\sum_{V\in\overline{\beta}^u}\delta_{{\tt Leb}(V)}.$$
  This yields that each $\overline{\mu}^{\varrho(u)}_{[0,T]}$ has the claimed distribution. The Markov property will follow from Dynkin's 
  criterion for when a function of a Markov process is a Markov process. Specifically, any two $\beta,\gamma\!\in\!\mathcal{I}_H$ with 
  $\phi(\beta)\!=\!\phi(\gamma)$ have intervals of the same lengths, so 
  there is a bijection $\eta\colon\gamma\rightarrow\beta$ such that ${\tt Leb}(\eta(V))={\tt Leb}(V)$ for all $V\!\in\!\gamma$. We construct
  coupled type-1 evolutions $(\beta^y,y\!\ge\!0)$ starting from $\beta$ and $(\gamma^y,y\!\ge\!0)$ starting from $\gamma$ by building 
  $(\mathbf{X}_\beta,\mathbf{N}_\beta)$ from $(\mathbf{X}_V,\mathbf{N}_V)$, $V\!\in\!\beta$, as in Section \ref{sec:prel}, and then building 
  $(\mathbf{X}_\gamma,\mathbf{N}_\gamma)$ by stitching together the same $(\mathbf{X}_{\eta(V)},\mathbf{N}_{\eta(V)})$, $V\!\in\!\gamma$, 
  in the order given by $\gamma$. Then $\phi(\beta^y)\!=\!\phi(\gamma^y)$ and $\phi(\overline{\beta}^u)\!=\!\phi(\overline{\gamma}^u)$. 
  In particular, the distributions of $\phi(\overline{\beta}^u)$ and $\phi(\overline{\gamma}^u)$ coincide, as required for Dynkin's criterion.
\end{proof}


Finally, we rewrite $\mathbf{R}(t)=\mathbf{B}(t)-(1\!-\!d) \mathbf{H}(t)$ as a decomposition of Brownian motion 
$\mathbf{B}(t)=\mathbf{R}(t)+(1-d)\mathbf{H}(t)$. In Proposition \ref{prop:Bertoin}, we time-changed $\mathbf{H}$ by the inverse local time $\tau_\mathbf{R}^0$. But then $(1\!-\!d)\mathbf{H}(\tau^0_\mathbf{R}(s))=\mathbf{B}(\tau_\mathbf{R}^0(s))$ is a time-changed Brownian motion. Also,
during each jump $\mathbf{H}(\tau^0_\mathbf{R}(s))-\mathbf{H}(\tau_\mathbf{R}^0(s-))$, we can write
$$\;\mathbf{B}(\tau^0_\mathbf{R}(s-)\!+\!t)-\mathbf{B}(\tau^0_\mathbf{R}(s-))
							=\mathbf{R}(\tau^0_\mathbf{R}(s-)\!+\!t)+(1\!-\!d)\!\left(\mathbf{H}(\tau_\mathbf{R}^0(s-)\!+\!t)\!-\!\mathbf{H}(\tau_\mathbf{R}^0(s-))\right)\!,\;$$
which is the part of the Brownian motion from which the corresponding excursion of $\mathbf{R}$ away from 0 is built. From that excursion, we built
the corresponding (increasing) stretch of $(1-d)\mathbf{H}$, whose local time is a ${\tt BESQ}(-2(1-d))$ excursion of length
$\mathbf{H}(\tau_\mathbf{R}^0(s))-\mathbf{H}(\tau_\mathbf{R}^0(s-))$, by Corollary \ref{cor:spindles}. Note that this part of the Brownian motion is
positive (relative to its starting level) and indeed it stays above the increasing stretch of $(1-d)\mathbf{H}$ since $\mathbf{R}(t)>0$
during $(\tau_\mathbf{R}^0(s-),\tau_\mathbf{R}^0(s))$. 

At $\tau_{\mathbf{R},\mathbf{H}}^{(0,0)}(u)$, when $\mathbf{R}$ and $\mathbf{H}$ both vanish, we also have 
$\mathbf{B}(\tau_{\mathbf{R},\mathbf{H}}^{(0,0)}(u))=0$. Bertoin \cite[Lemma 3.2]{Bertoin1990} noted that 
$\widetilde{\mathbf{R}}(t)=\mathbf{R}(\tau_{\mathbf{R},\mathbf{H}}^{(0,0)}(u)-t)$ and $\widetilde{\mathbf{H}}(t)=-\mathbf{H}(\tau_{\mathbf{R},\mathbf{H}}^{(0,0)}(u)-t)$ give $\widetilde{\mathbf{R}}\stackrel{d}{=}\mathbf{R}$ and $\widetilde{\mathbf{H}}\stackrel{d}{=}\mathbf{H}$ on $[0,\tau_{\mathbf{R},\mathbf{H}}^{(0,0)}(u)]$. This yields\vspace{-0.1cm}
$$-\widetilde{\mathbf{R}}(t)=\widetilde{\mathbf{B}}(t)-(1-d)\widetilde{\mathbf{H}}(t),\qquad t\in[0,\tau_{\mathbf{R},\mathbf{H}}^{(0,0)}(u)],$$
with a minus sign on the left-hand side, so the reversibility is rather subtle. It would be interesting to understand more fully the behaviour of 
$\mathbf{B}$ on intervals 
$[\tau_\mathbf{R}^0(s-),\tau_\mathbf{R}^0(s)]$. E.g., what are the local times of $\mathbf{B}$ on 
$[\tau_\mathbf{R}^0(s-),\tau_\mathbf{R}^0(s)]$, $s\ge 0$?

\bibliographystyle{abbrv}
\bibliography{AldousDiffusion}

\section*{Acknowledgements.} The author would like to thank Jean Bertoin for pointing out parallels between his work \cite{Bertoin1990,Bertoin1990c} and the author's joint work \cite{IPPA,IPPB}, and for several fruitful discussions that led to this project. 
The author would also like to thank his co-authors Noah Forman and Douglas Rizzolo for some feedback on a draft and for further discussions.  

\end{document}